\documentclass[12pt,twoside,reqno]{amsart}
\usepackage{amsfonts}
\usepackage{amsmath,amscd}
\usepackage{amsthm}
\usepackage{latexsym}
\usepackage{amssymb}
\usepackage{enumerate}
\usepackage{hyperref}
\usepackage{xcolor}
\usepackage{url}
\pagestyle{myheadings} \textheight 22truecm \textwidth 15truecm
\newtheorem{theorem}{Theorem}
\newtheorem*{theorem*}{Theorem $\mathbf{1'}$}
\newtheorem{proposition}[theorem]{Proposition}
\newtheorem{corollary}[theorem]{Corollary}

\newtheorem{lemma}[theorem]{Lemma}

\newtheorem*{claim*}{Claim}

\newcommand{\varpsilon}{\varepsilon}
\newcommand{\Z}{\mathbb{Z}}
\newcommand{\Q}{\mathbb{Q}}
\newcommand{\R}{\mathbb{R}}
\newcommand{\C}{\mathbb{C}}
\newcommand{\N}{\mathbb{N}}

\newcommand{\e}{\mathrm{e}}
\newcommand{\re}{\mathrm{Re}}
\newcommand{\im}{\mathrm{Im}}
\newcommand{\ie}{{\it{i.$\,$e.\ }}}
\newcommand{\InZ}{I_{Z+1}^n} 
\newcommand{\Hh}{\mathcal{H}}
\newcommand{\M}{\mathcal{M}}

\newcommand{\g}{\mathfrak{g}}

 \begin{document}
\title{Polynomials  associated to Lie algebras}
 \author{Mat\'ias Bruna, Alex Capu\~nay and Eduardo Friedman}
 \subjclass[2020]{Primary 11M41. Secondary 17B99} 
 \address{Department of Mathematics, Univ.\  Toronto, Toronto, ON, M5S 2E4, Canada}\email{matias.bruna@mail.utoronto.ca}

 \address{Fac.  Ciencias e ICEN,  Univ.\  Arturo Prat, Av.\ Arturo Prat 2120, Iquique, Chile}\email{acapunay@unap.cl}

 \address{Depto.\   Matem\'aticas, Fac.\ Ciencias, Univ.\  Chile, Casilla 653, Santiago, Chile}  \email{friedman@uchile.cl}

\begin{abstract} 
We associate to a semisimple complex Lie algebra $\mathfrak{g}$ a sequence of polynomials 
$P_{\ell,\mathfrak{g}}(x)\in\mathbb{Q}[x]$ in $r$ variables, where $r$ is the rank of $\mathfrak{g}$ and $\ell=0,1,2,\ldots $.  The  polynomials  $P_{\ell,\mathfrak{g}}(x)$
  are uniquely associated to the isomorphism class of $\mathfrak{g}$, up to re-numbering the variables, and are defined as special values of a variant of Witten's zeta function. 
Another set of polynomials associated to $\mathfrak{g}$ were defined in 2008 by Komori, Matsumoto and Tsumura using different special values of another variant of Witten's zeta function.
\end{abstract}
 
 \maketitle


\section{Introduction}\label{INTRO}
 Motivated by  physics,  Witten introduced in 1991  the Dirichlet series 
$
\zeta_\text{W}(s;G):= \sum_{\rho }\frac{1}{(\dim\rho)^{s}}$
 \cite[eq.\ 4.72, p.\ 197]{Wit}, where the sum runs over all 
 irreducible unitary representations $\rho$ of certain groups  $G$. Witten used the values of $\zeta_\text{W}(s;G)$  at  positive integers $s$ to give formulas for volumes of  some moduli spaces of principal $G$-bundles.  

When $G$ is a simply connected compact Lie group,   the correspondence between representations of $G$ and of its Lie algebra $\g$    
 led  Zagier \cite{Zag} to   the  expression 
\begin{align} \label{WittDef}
\zeta_\text{W}(s;G) =K_\g^s \sum_{m\in\N^r}\prod_{\  \alpha\in\Phi^+}\!(m_1\lambda_1+\cdots +m_r\lambda_r,\alpha^{\!\lor})^{-s}=:\zeta_\text{W}(s;\g) ,
\end{align}
where  $r$ is the rank of $\g$, $\re(s)>r$, $\alpha$ runs over a set $\Phi^+$ of positive roots in a root system $\Phi$ associated to $\g$, $(\ \, ,\  )$ denotes the inner product (Killing form),  $\alpha^{\!\lor}: =\frac{2}{(\alpha,\alpha)}\alpha$ 
is the co-root corresponding to $\alpha$,    $\lambda_1,\ldots,\lambda_r $ are the fundamental dominant weights 
 associated  to $\Phi^+ $, and $K_\g  :=\prod_{\alpha\in\Phi^+}(\lambda_1+\cdots + \lambda_r,\alpha^{\!\lor}) \in\N$.  Zagier also remarked  that 
 in the case of  $\g=\mathfrak{sl}_2$, the function  $\zeta_\text{W}(s;\g) $ coincides with   the Riemann zeta function $\zeta(s)$. 

 No polynomials are in sight when considering just $\zeta_\text{W}(s;\g)$,   but recall that       Hurwitz  inserted a    variable $x$  into $\zeta(s)$ by defining  
 $$
\qquad H(s,x):=\sum_{k\in \N_0} 
    (x+k)^{-s}\qquad\qquad(x>0,\ \re(s)>1,\ 
\N_0:=\N\cup\{0\}). $$
Thus,  $H(s,1)=\zeta(s)$.     As with $\zeta(s)$, there is an analytic   continuation  of $H(s,x)$
 to all $s\in\C- \{1\}$ whose values $H(-\ell,x)$ at $s=-\ell$ for  $\ell\in\N_0$   are polynomial  functions of  $x$. In fact, 
$H(-\ell,x)=-B_{\ell+1}(x)/(\ell+1)$ is  the  Bernoulli polynomial of degree $\ell+1$, with a different  normalization. 

Here we    extend the Hurwitz  procedure to  semisimple Lie algebras and define polynomials $P_{\ell,\g}(x)$ in 
 $r$ variables,  where   $r$ is  the rank of $\g$  and 
 $\ell\in\N_0.$  These polynomials are  naturally associated to $\g$ since they turn out to depend only on the isomorphism class of $\g$, up to re-numbering the variables $x_1,\ldots,x_r$. 
To define $P_{\ell,\g}$     start with  $\re(s)>r$ and  
 $x=(x_1,\ldots,x_r) \in(0,\infty)^r$,   and      define the 
  absolutely convergent Dirichlet series   (again with $\N_0:=\N\cup\{0\})$ 
\begin{align} 
\label{GenHurwDef}
\zeta_\g(s,x) :=  \sum_{m\in\N_0^r}\prod_{\ \alpha\in\Phi^+}\big((m_1+x_1)\lambda_1+\cdots+ (m_r+x_r)\lambda_r, \alpha^{\!\lor}\big)^{-s}  .
\end{align}
Thus, $K_\g^s\zeta_\g\big(s,(1,\ldots,1)\big) 
 =\zeta_\text{W}(s;\g)$.
It is known (see Prop.\ \ref{MeromZeta})  that   $s\to\zeta_\g(s,x)$ has a meromorphic continuation to all $s\in\C$  which is  
 regular at $s=0,-1,-2,\ldots$. 

Our main aim here is to prove the following.
 \begin{theorem}\label{Liepolys} Let $\g$ be a 
 semisimple complex Lie algebra of rank $r$,  
let $n $  be the number of positive roots 
 in a root system  for $\g$, let $\ell=0,1,2,\ldots$, and let 
 $\zeta_\g(s,x) $ be as in \eqref{GenHurwDef}. Then  $P_{\ell,\g}(x):=\zeta_\g(-\ell,x)$  is a   polynomial 
with rational coefficients, has total  degree $n\ell+r$   in $x=(x_1,\ldots,x_r)$, and satisfies the following properties. 
\vskip.2cm

\noindent$\mathrm{(0)}$    $P_{\ell,\mathfrak{sl}_2 }(x)=-B_{\ell+1}(x)/(\ell+1)$, where   $B_{\ell+1}(x)$ is the   ${(\ell+1)}^{\mathrm{th}}$-Bernoulli polynomial. 
    \vskip.15cm 

\noindent$\mathrm{(i)}$   $P_{\ell,\g}(x)$ depends only on the isomorphism class of $\g$, up to re-numbering    $x_1,\ldots,x_r$. 

    \vskip.15cm 

\noindent$\mathrm{(ii)}$  If $\g_1$ and $\g_2$ are semisimple Lie  algebras, then $P_{\ell,\g_1\times\g_2}(x,y)=P_{\ell,\g_1}(x)  P_{\ell,\g_2}(y)$, on conveniently numbering the variables.

    \vskip.15cm

\noindent$\mathrm{(iii)}$  Define commuting  difference operators  
 $(\Delta_{e_k} P)(x):=P(x+e_k)-P(x)   $, where $e_1,\ldots,e_r$  is the standard basis of $\R^r$. Then 
$$
\big( \Delta_{e_1}\circ\Delta_{e_2}\circ\cdots\circ\Delta_{e_r})(P_{\ell,\g}\big)(x)=(-1)^r\Big(\!\!\prod_{\ \alpha\in\Phi^+} \sum_{k=1}^r  x_k( \lambda_k,\alpha^{\!\lor})\Big)^{\!\ell}\in\Z[x].
$$

    \vskip.15cm 
\noindent$\mathrm{(iv)}$  $P_{\ell,\g}(\mathbf{1} - 
x)=(-1)^{n\ell+r}P_{\ell,\g}(x),$ where   $\mathbf{1}:=(1,\ldots,1)\in \R^r$.  
    \vskip.15cm 

\noindent$\mathrm{(v)}$ There is  a   Bernoulli  polynomial  expansion
$$  
\quad\quad P_{\ell,\g}(x)=\sum_{\substack{L=(L_1,\ldots,L_r)\in\N_0^r\\ L_1+\cdots+ L_r=n\ell+r}}a_L\prod_{i=1}^r B_{L_i}(x_i)\qquad\qquad(a_L=a_{L,\ell,\g}\in\Q,\ \N_0:=\N\cup\{0\}).
$$
\end{theorem}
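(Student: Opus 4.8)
\emph{Proof strategy.} The plan is to base everything on a Gamma-integral representation of $\zeta_\g$. For $\re(s)>r$, inserting $\Gamma(s)y^{-s}=\int_0^\infty t^{s-1}e^{-ty}\,dt$ into each of the $n$ factors of \eqref{GenHurwDef} and summing over $m\in\N_0^r$ the resulting product of geometric series gives
\[
\zeta_\g(s,x)=\frac{1}{\Gamma(s)^{n}}\int_{(0,\infty)^{n}}\Big(\prod_{\alpha\in\Phi^+}t_\alpha^{s-1}\Big)\Phi_x(t)\,dt,\qquad
\Phi_x(t):=\prod_{k=1}^{r}\frac{e^{-x_k u_k(t)}}{1-e^{-u_k(t)}},
\]
where $u_k(t):=\sum_{\alpha\in\Phi^+}(\lambda_k,\alpha^{\lor})\,t_\alpha$; since $(\lambda_k,\alpha_j^{\lor})=\delta_{kj}$ for the simple roots $\alpha_1,\dots,\alpha_r$, one has $u_k(t)=t_{\alpha_k}+\sum_{\alpha\ \text{not simple}}(\lambda_k,\alpha^{\lor})\,t_\alpha$ with coefficients in $\N_0$. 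I would then run a multidimensional Hankel-contour argument: replacing each $\int_0^\infty t_\alpha^{s-1}(\cdots)\,dt_\alpha$ by an integral over a Hankel contour turns $1/\Gamma(s)^{n}$ into $\Gamma(1-s)^{n}/(2\pi i)^{n}$, and choosing the radii of the circular parts so that the variables attached to non-simple roots are far smaller than those attached to simple roots (and all of them small) keeps the contour off the polar divisor $\bigcup_k\{u_k(t)\in 2\pi i\,\Z\}$ of $\Phi_x$. At $s=-\ell$ the powers $z_\alpha^{s-1}=z_\alpha^{-\ell-1}$ are single-valued, so the two rays of each Hankel contour cancel and only a small polytorus survives, giving the key identity
\[
P_{\ell,\g}(x)=\zeta_\g(-\ell,x)=(-1)^{n\ell}(\ell!)^{n}\cdot\big[\,\textstyle\prod_{\alpha\in\Phi^+}z_\alpha^{\ell}\,\big]\,\Phi_x(z),
\]
where $\big[\prod_\alpha z_\alpha^{\ell}\big]\Phi_x$ denotes the coefficient of $\prod_\alpha z_\alpha^{\ell}$ in the Laurent expansion of $\Phi_x$ on that polytorus. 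For $\g=\mathfrak{sl}_2$ (so $r=n=1$, $u_1(t)=t$) this is the classical computation $H(-\ell,x)=(-1)^{\ell}\ell!\cdot B_{\ell+1}(1-x)/(\ell+1)!=-B_{\ell+1}(x)/(\ell+1)$, which is (0). I expect the Hankel-contour bookkeeping — rigorously deforming $(0,\infty)^n$ past the polar divisor and tracking the constant $(-1)^{n\ell}(\ell!)^{n}$ — to be the main obstacle; the rest is essentially formal.

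Polynomiality, the degree upper bound, and parts (iv) and (v) I would read off from the generating function. Write $\Phi_x(t)=\big(\prod_{k}u_k(t)^{-1}\big)G_x(t)$ with $G_x(t):=\prod_{k=1}^{r}\frac{u_k(t)e^{-x_k u_k(t)}}{1-e^{-u_k(t)}}=\prod_{k=1}^{r}\sum_{j\ge 0}B_j(1-x_k)\frac{u_k(t)^{j}}{j!}$, which is entire in $t$ with Taylor coefficients in $\Q[x]$. On the polytorus, $\prod_k u_k(t)^{-1}$ expands into a Laurent series all of whose monomials have total degree $-r$, so only the homogeneous part $G_x^{(n\ell+r)}$ of $G_x$ of degree $n\ell+r$ contributes to the coefficient of $\prod_\alpha z_\alpha^{\ell}$; since that part is an honest polynomial in $t$, we get $P_{\ell,\g}(x)\in\Q[x]$, with $\deg_x P_{\ell,\g}\le n\ell+r$ because $\deg B_j(1-x_k)=j$ while $\sum_k j_k=n\ell+r$ inside $G_x^{(n\ell+r)}$. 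For (iv): the identity $\frac{e^{xu}}{1-e^{u}}=-\frac{e^{-(1-x)u}}{1-e^{-u}}$ gives $\Phi_x(-z)=(-1)^{r}\Phi_{\mathbf{1}-x}(z)$, and substituting $z\mapsto -z$ in the coefficient identity (using that $\prod_\alpha z_\alpha^{\ell}$ has total degree $n\ell$) yields $P_{\ell,\g}(\mathbf{1}-x)=(-1)^{n\ell+r}P_{\ell,\g}(x)$. For (v): using $B_j(1-x_k)=(-1)^{j}B_j(x_k)$ inside $G_x^{(n\ell+r)}$ rewrites the coefficient as
\[
P_{\ell,\g}(x)=(-1)^{r}(\ell!)^{n}\sum_{\substack{L=(L_1,\dots,L_r)\in\N_0^{r}\\ L_1+\cdots+L_r=n\ell+r}}\frac{c_L}{L_1!\cdots L_r!}\,\prod_{i=1}^{r}B_{L_i}(x_i),
\]
where $c_L\in\Q$ is the coefficient of $\prod_\alpha z_\alpha^{\ell}$ in the Laurent expansion of $\prod_{k=1}^{r}u_k(z)^{L_k-1}$ on the polytorus; this is the asserted Bernoulli expansion, with $a_L=(-1)^{r}(\ell!)^{n}c_L/(L_1!\cdots L_r!)$.

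Part (iii) I would obtain by telescoping the Dirichlet series: for $\re(s)>r$, replacing $x_k$ by $x_k+1$ in \eqref{GenHurwDef} is the same as replacing $m_k$ by $m_k+1$, so $(\Delta_{e_k}\zeta_\g)(s,x)=-\sum_{m\in\N_0^{r},\ m_k=0}\prod_{\alpha}\big(\cdots\big)^{-s}$; iterating over $k=1,\dots,r$ removes every index and leaves the single term $m=0$, i.e. $(\Delta_{e_1}\circ\cdots\circ\Delta_{e_r}\zeta_\g)(s,x)=(-1)^{r}\prod_{\alpha\in\Phi^+}\big(\sum_{k}x_k(\lambda_k,\alpha^{\lor})\big)^{-s}$. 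Both sides are meromorphic in $s$ (indeed the right side is entire), so the identity persists to $s=-\ell$, giving the formula in (iii), which lies in $\Z[x]$ since $(\lambda_k,\alpha^{\lor})\in\Z$; comparing leading terms in this identity (the $r$-fold difference lowers by exactly $r$ the largest total degree among monomials involving all variables, with no cancellation among top terms) together with the bound $\deg_x P_{\ell,\g}\le n\ell+r$ above pins $\deg P_{\ell,\g}$ at $n\ell+r$. Part (ii) is immediate from the definition: the root system of $\g_1\times\g_2$ is the orthogonal disjoint union $\Phi_1\sqcup\Phi_2$, so for $\alpha\in\Phi_1^{+}$ the linear form $\sum_k(m_k+x_k)(\lambda_k,\alpha^{\lor})$ involves only the variables of $\g_1$; the sum in \eqref{GenHurwDef} then factors as $\zeta_{\g_1\times\g_2}(s,(x,y))=\zeta_{\g_1}(s,x)\,\zeta_{\g_2}(s,y)$, whence the product formula at $s=-\ell$.

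Finally, part (i): all that enters \eqref{GenHurwDef} is the family of integers $(\lambda_k,\alpha^{\lor})$, $\alpha\in\Phi^+$, $1\le k\le r$, which are the Cartan pairings of the fundamental weights with the positive coroots and therefore depend only on the root system of $\g$, not on the scaling of the Killing form. An isomorphism of semisimple Lie algebras induces an isomorphism of the associated root systems, and two choices of a positive system within a fixed root system are related by the Weyl group, which sends fundamental weights to fundamental weights and at worst permutes the simple roots — that is, relabels $x_1,\dots,x_r$. Hence $P_{\ell,\g}$ is determined by the isomorphism class of $\g$ up to renumbering the variables, which is (i).
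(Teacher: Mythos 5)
Your proposed route to the analytic continuation and the special values --- a multidimensional Hankel contour for
$\int_{(0,\infty)^{n}}\prod_{\alpha}t_\alpha^{s-1}\Phi_x(t)\,dt$ --- is genuinely different from the paper's, and you are right that it is ``the main obstacle'': that is precisely the step you have left unproved. The paper avoids Hankel contours entirely. In \S\ref{Zmercont}--\S\ref{Smercont} it decomposes $(0,\infty)^n$ into cones $\Delta^\gamma$, changes variables $T\to\sigma$ so that the singularity at $T=0$ is isolated in $\sigma_1$, pushes the abscissa of convergence left by repeated integration by parts (Lemma~\ref{MeromorphyZ}), and then extracts $\zeta_{N,n}(-\ell,w,\M)$ from a finite Taylor expansion (Theorem~\ref{AppliedFiniteTaylor}). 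Your sketch does point at a workable alternative: one would have to justify deforming $(0,\infty)^{n}$ to a product of Hankel loops with nested radii (non-simple root variables much smaller than simple ones), verify the contour avoids $\bigcup_k\{u_k(t)\in 2\pi i\Z\setminus\{0\}\}$, and show that $\Phi_x$ admits a Laurent expansion on the resulting polytorus whose coefficient of $\prod_\alpha z_\alpha^{\ell}$ is what the residue picks out. None of this is routine in $n$ variables, and as written the identity $P_{\ell,\g}(x)=(-1)^{n\ell}(\ell!)^{n}\,[\prod_\alpha z_\alpha^{\ell}]\Phi_x(z)$ is asserted, not proved. That is the one genuine gap; the constant, at least, is consistent with the $n=1$ Hurwitz computation.

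Granting that identity, your derivations of (0), (iv), and especially (v) are correct and take a more direct route than the paper. You obtain the Bernoulli expansion (v) by writing $\Phi_x=(\prod_k u_k^{-1})G_x$, expanding $G_x$ into Bernoulli polynomials in the $u_k$'s, and observing that homogeneity of the Laurent factor forces only the degree-$(n\ell+r)$ part of $G_x$ to contribute. The paper instead brings in the companion integral $\mathcal{Z}_\g$, proves a Raabe formula (Proposition~\ref{Raabe}) relating $Q_{\ell,\g}$ to $P_{\ell,\g}$, and uses that the Raabe operator carries the Bernoulli basis to the monomial basis; the degree $n\ell+r$ is then read off from the homogeneity in $w$ of $\mathcal{Z}_{N,n}$ in \eqref{Homog}. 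Your route is shorter; the paper's route produces the $Q_{\ell,\g}$ and the Raabe relation as byproducts, which are explicitly part of Theorem~$\mathbf{1'}$. For the degree, your ``no cancellation among top terms'' aside is unneeded and slightly off; the clean argument is that each $\Delta_{e_i}$ lowers total degree by at least one, so $\deg(\Delta_{e_1}\circ\cdots\circ\Delta_{e_r}P)\le\deg P-r$; combined with (iii) (degree exactly $n\ell$, nonzero) and the upper bound $\deg P\le n\ell+r$ from your generating-function argument, this forces $\deg P=n\ell+r$.

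Parts (ii) and (iii) match the paper's proof. On (i), your ``Cartan-pairing'' invocation is correct but terse: the paper is careful to separate (a) changing $\Phi^+$ inside a fixed $\Phi$, handled via a Weyl group element that permutes the simple roots and weights, from (b) replacing $\Phi$ by an isomorphic root system $\Gamma=f(\Phi)$, where $f$ need not be an isometry and one must verify directly from \eqref{RootSysIsoDef} that the pairings $(\lambda_i,\alpha^{\lor})$ are preserved.
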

\noindent The caveat   in (i) and (ii) of 
 Theorem \ref{Liepolys} about re-numbering the variables is due   to the arbitrary choice of numbering  of  the fundamental dominant 
 weights $\lambda_1,\ldots,\lambda_r $. 

Recall that   Bernoulli polynomials satisfy the identities 
$$B_{\ell+1}(x+1)-B_{\ell+1}(x)=(\ell+1)x^\ell,\qquad\quad B_{\ell+1}(1-x) = 
(-1)^{\ell+1} B_{\ell+1}(x) .
$$
In view of property (0), (iii-v)  in 
 Theorem \ref{Liepolys}   generalize the above identities   from $\mathfrak{sl}_2$    to any semisimple $\g$. It  is also clear that (v) implies (iv).

In contrast with the case of rank $r=1$,  when $r>1$ properties (iii) and (v)   no longer  uniquely characterize the polynomial $P_{\ell,\g}$. They only fix the $a_L$ for $L$ such that  $L_i\not=0$ for all $i$.  It would be interesting to find a clear characterization of 
$P_{\ell,\g}$  in terms of the root system attached to $\g$.   A 
 property   of the $P_{\ell,\g}$ polynomials  additional to  Theorem \ref{Liepolys}  is provided by K.$\,$C.\ Au's recent 
 proof \cite{Au} of the Kurokawa-Ochiai  conjecture \cite{KO}, \ie  $P_{  \ell,\g}(\mathbf{1})=0 $ for all  even $\ell\in\N$.

 Only for $\g=\mathfrak{sl}_3$ have we   been able to prove  a relatively simple formula for $P_{\ell,\g}$ for all $\ell\in\N_0$. 
 Although  we shall not prove this here,
 \begin{align}\nonumber
 P_{\ell,\mathfrak{sl}_3}(x_1,x_2) =\frac{ (\ell !)^2\big( B_{3\ell+2}(x_1)+ B_{3\ell+2}(x_2) \big)}{ 2(-1)^{\ell}(3\ell+2)(2\ell+1)!}  
+\sum_{k=0}^{\ell}    \binom{\ell}{k}  \frac{B_{2\ell-k+1}(x_1)B_{\ell+k+1}(x_2)}{(2\ell-k+1)(\ell+k+1)},
\end{align}
where   $ \binom{\ell}{k}$ denotes a  binomial coefficient.  
In Theorem \ref{AppliedFiniteTaylor} we actually give a formula for  $P_{\ell,\g}$, but it is too complicated to be   more than an algorithm  
  for computing $P_{\ell,\g}$, and practical  only for small $r$ and  $\ell$.

The definition and study of polynomials associated to semisimple 
 Lie algebras via variants of Witten's zeta function was initiated nearly 20 years ago by Komori, 
 Matsumoto and  Tsumura.\footnote{\ See  \cite{KMT1}  for an early summary of their work and their    recent book  \cite{KMT2} on  zeta functions associated to root systems for a comprehensive treatment.}   Because they were mainly interested in the values   at positive 
 integers, and also at $n$-tuples of positive integers,  they inserted a  vector  variable  $y\in \R\lambda_1+\cdots+\R\lambda_r$ into 
 \eqref{WittDef} differently  than we did in \eqref{GenHurwDef}. Namely they  defined for 
 $\mathbf{s}=(s_\alpha)_{\alpha\in\Phi^+}\in\C^n$ with  $\re(s_\alpha)$ sufficiently large, 
\begin{align} \label{KMTDef}
S( {\mathbf{s}},y;\g):=\sum_{m\in\N^r}\e^{2\pi i(y,\sum_{k=1}^r m_k\lambda_k)}\prod_{\alpha\in\Phi^+}\big(\sum_{k=1}^r m_k\lambda_k,\alpha^{\!\lor}\big)^{-s_\alpha}   . 
\end{align} 

 The function $y\to S( {\mathbf{s}},y;\g)$  is   not quite a polynomial in $y$ (for any fixed $\mathbf{s}$) since it has the   periodicity $S(\mathbf{s},y+ \alpha^{\!\lor};\g)=S(\mathbf{s},y;\g)$ for all  $\alpha\in\Phi$. However,  Komori, Matsumoto and  Tsumura \cite{KMT1}  \cite{KMT2} showed  that  if we take $s_\alpha\in\N  $  and exclude $y$ from a set of measure 0, then $S( {\mathbf{s}},y;\g)$ is locally a polynomial in $y$. 
The simplest     of these 
KMT polynomials occur  for 
 $\g=\mathfrak{sl}_2$, where they are essentially the Bernoulli polynomials. It might be interesting to study   how the $P_{\ell,\g}$ are related to the KMT polynomials for other $\g$ (cf.\ \cite[\S17.2]{KMT2}).

The polynomials    $P_{\ell,\g}$  are closely related  to another   set of polynomials  arising from 
\begin{align} \label{GenIntzetaDef}
\mathcal{Z}_\g(s,x)&:= \int_{t\in(0,\infty)^r} \prod_{\ \alpha\in\Phi^+}\big((t_1+x_1)\lambda_1+\cdots+ (t_r+x_r)\lambda_r, \alpha^{\!\lor}\big)^{-s} \,dt   ,
\end{align}
where again we initially assume $\re(s)>r$ and $x\in(0,\infty)^r$. 
Like  $\zeta_\g(s,x)$ in \eqref{GenHurwDef}, $\mathcal{Z}_\g(s,x)$ has a meromorphic continuation in $s$ to all of $\C$ which is regular  at $s=-\ell$ for $\ell\in\N_0$ (see   
Proposition \ref{MeromZ}). This  allows us to 
 define $Q_{\ell,\g}(x):=\mathcal{Z}_\g(-\ell,x)$, which  turns out to be   a homogeneous polynomial in $x$ of total 
 degree $n\ell+r$. 

On ordering the variables compatibly, the $Q_{\ell,\g}$  and $P_{\ell,\g}$ are related  by the  Raabe formula  (cf.\ \cite[Prop. 2.2]{FP}) 
\begin{align} \label{RaabeFORM}
Q_{\ell,\g}(x) = 
\int_{t\in[0,1]^r} P_{\ell,\g}(x+t)  \,dt.
\end{align}
In fact,   \eqref{RaabeFORM} is equivalent  to  \cite[Lemma 2.4]{FP}
\begin{align}\label{RaabeFORM2}
Q_{\ell,\g}(x) =\sum_{\substack{L=(L_1,\ldots,L_r)\in\N_0^r\\ L_1+\cdots+ L_r=n\ell+r}}a_L\prod_{i=1}^r x_i^{L_i},
\end{align}
where   $a_L=a_{L,\ell,\g}$ is given by  (v) of Theorem \ref{Liepolys}. 
The map in \eqref{RaabeFORM} taking $P$ to $Q$, namely $Q (x) =  \int_{t\in[0,1]^r} P (x+t)  \,dt$, is  an 
 automorphism of  $\R[x]$ only as a graded $\R$-vector space. It certainly is not a ring automorphism of $\R[x]$. Thus, $Q_{\ell,\g}$ and 
$P_{\ell,\g}$  should have very different properties, even if they are both naturally associated to $\g$ and are  easily computed from one another.

Except for (i) and (ii) in Theorem \ref{Liepolys}, the 
 remaining properties stated  there are shared by more general series and integrals.  We devote \S\ref{ShinBarnes}--\ref{Relations} to studying these 
 functions under assumptions that allow us to treat $\zeta_\g$ in Theorem  \ref{Liepolys}. In \S\ref{ProofTheorem1} we prove     Theorem $1'$, which  
 includes Theorem \ref{Liepolys} and  results on the $Q_{\ell,\g}$ polynomials. 
In the final section we use  Theorem \ref{AppliedFiniteTaylor} to compute examples of $P_{\ell,\g}$ for $\g$ of small rank. We also take $\ell$  small  to avoid long expressions.

\section{The  Shintani-Barnes zeta function 
 $\zeta_{N,n}$}\label{ShinBarnes}
Let $\M=(a_{ij})_{\substack{1\le i\le N\\ 1\le j\le n}}$ be an  $N\times n$ matrix 
with coefficients $a_{ij}\in\C$.   We  henceforth always assume that $\M$ satisfies  
\begin{align}
 &\textbf{Hypothesis $\Hh$.}\text{\ Each entry $a_{ij} $  of $\M$  
     either vanishes or has a positive real part,}\nonumber\\
&    \text{and   no row vanishes.} \label{HypH}
\end{align}

\noindent Thus, for each $i$ there is a $j$ such that  $\re(a_{ij})>0$.   
We let $Z_\M$ be such that  every row of $\M$ has at least $n-Z_\M $ non-zero  entries, and some row has exactly $n-Z_\M$ such entries. Letting $z(i):=\text{cardinality}\big(\{j|\,a_{ij}=0\}\big)$, we have by Hypothesis $\Hh$
\begin{equation}\label{zM}
  0\le Z_\M:=\max_i\{ z(i) \}<n .
\end{equation}

  For $w=(w_1,\ldots,w_n)\in\C^n$   such that $\re(w_j)>0\ \,(1\le j\le n)$   define  for 
$\re(s)>N/(n-Z_\M) $ the absolutely  convergent  series   and   integral    (see \S\ref{Converge})  
\begin{align} \label{ZetaSerDef}
\zeta_{N,n}(s,w,\M)&:= \sum_{k_1,\ldots,k_{N}=0}^{\infty}\, \prod_{j=1}^{n}\! \big((w_{j}+k_1a_{1j} +k_2a_{2j}+\cdots+k_{N}a_{Nj})^{-s}\big),  
\\  
\mathcal{Z}_{N,n}(s,w,\M)&:=\int_{t\in(0,\infty) ^N}  \prod_{j=1}^{n}\! 
\big((w_{j}+t_1a_{1j}+t_2a_{2j}+\cdots+t_{N}a_{Nj})^{-s}\big)\,dt , \label{ZetaIntDef}
\end{align}
where  the powers in each factor  use the principal branch of the logarithm  and $dt=dt_1\cdots dt_N$ is Lebesgue measure. 

The function  $\zeta_\g(s,x)$ defined in 
 \eqref{GenHurwDef} is  a special case of $\zeta_{N,n}(s,w,\M )$   in \eqref{ZetaSerDef} as 
\begin{align} \label{LieToShinBarn}
&\zeta_\g(s,x)= \zeta_{r,n}(s,W(x),\M_\g),\qquad r:=\text{rank}(\g) ,\qquad n :=\text{cardinality}(\Phi^+)  ,
\\ &   \big(W  (x)\big)_\alpha :=\sum_{i=1}^r x_i(\lambda_i,\alpha^{\!\lor}) ,\qquad \big(\M_\g\big)_{i\alpha}:=(\lambda_i,\alpha^{\!\lor})\ \ \ (1\le i\le r,\ \alpha\in \Phi^+) ,   
 \qquad  \nonumber
\end{align} 
where $x\in(0,\infty)^r$, and we have  labeled the  $n$ columns of $\M_\g$ by $\alpha\in\Phi^+$  instead of $j$ (the order of the factors in   \eqref{ZetaSerDef} changes nothing, of course). 
  Hypothesis $\Hh$ is satisfied since 
 $\big(\M_\g\big)_{i\alpha}\in \N\cup\{0\} $  and 
$\M_{i\alpha_i}=1$, 
 where $\alpha_i\in\Phi^+$ is the simple root satisfying $(\lambda_i, \alpha_j^{\, \lor}   )=\delta_{ij}$, the Kronecker delta \cite[p.\ 67]{Hum}. 
Moreover, $ \big(W  (x)\big)_\alpha>0$ since   $\alpha=\sum_{i=1}^ r d_i\alpha_i$ with $d_i\ge0$ and some $d_{i_0}>0$.
Similarly,   from \eqref{ZetaIntDef} 
and  \eqref{GenIntzetaDef}, 
\begin{align} \label{LieToInt}
\mathcal{Z}_\g(s,x)=\mathcal{Z}_{r,n}(s,W(x),\M_\g) .
\end{align}

\subsection{Half-plane of convergence}\label{Converge}

 The    absolute convergence of the series in \eqref{ZetaSerDef} and of the  integral in \eqref{ZetaIntDef}, uniform for $(s,w)$ in compact 
 subsets of   $$\{s|\,\re(s)>N/(n-Z_\M)\}\times \{w\in\C^n|\,\re(w_k)>0,\ 1\le k\le n\},$$ follows  readily from Hypothesis $\Hh$ in \eqref{HypH}. Indeed, let 
$$
c:=\min_{i,j}\{\re(a_{ij})|\, a_{i,j}\not=0\}, \    d:=\min_{j}\{\re(w_j)\}, \  C:=\min(c,d),\   
A_j:=\big\{i\big|\,a_{i,j}\not=0\big\}.
$$ 
Note that $C>0$ by   $\Hh$. Thus, for $\ell_i \ge0\ \,(1\le i\le N)$, 
$$
 \Big|w_j+\sum_{i=1}^N \ell_i a_{ij}\Big|\ge\re\Big(w_j+\sum_{i=1}^N \ell_i a_{ij}\Big)\ge d+c\sum_{i\in A_j}\ell_i\ge C\Big(1+\sum_{i\in A_j}\ell_i\Big),
$$
and so 
\begin{equation}\label{Compare}
\prod_{j=1}^{n}|w_{j}+{\textstyle{\sum_{i=1}^N \ell_ia_{ij}}} |\ge C^n\prod_{j=1}^n \big(1+\sum_{i\in A_j}\ell_i\big)\ge C^n(1+\ell_1^{n-Z_\M}+\cdots +\ell_N^{n-Z_\M}),
\end{equation}
as every $i$ belongs to at  least $n-Z_\M$ different  $A_j$'s by definition \eqref{zM}.  Since 
$$
|z^s|=|z|^{\re(s)} e^{-\im(s)\arg(z)}\ge|z|^{\re(s)} e^{-D\pi/2}\qquad(\re(z)>0, \ |\im(s)|\le D),
$$
it follows from \eqref{Compare}  that 
the series  \eqref{ZetaSerDef} (resp.,  integral  \eqref{ZetaIntDef}) can be compared with a well-known   series (resp., integral)   converging for $\re(s)>N/(n-Z_\M)$. In particular, $\zeta_{N,n}(s,w,\M)$ and 
$\mathcal{Z}_{N,n}(s,w,\M)$ converge if $\re(s)>N,\ \re(w_k)>0  \  (1\le k\le n)$, and are analytic functions of $(s,w)$ in this domain.

\subsection{Analytic continuation of the zeta integral $\mathcal{Z}_{N,n}$}\label{Zmercont} We now turn to 
 the meromorphic continuation of the zeta  integral 
 $\mathcal{Z}_{N,n}$ in  \eqref{ZetaIntDef}, leaving the 
 Dirichlet series $\zeta_{N,n}$ in 
 \eqref{ZetaSerDef} to \S\ref{Smercont}. We will generalize the approach of  \cite[\S2]{FR}. 

Pick and fix  an integer 
 $Z$   satisfying $Z_\M\le Z <n$, where $Z_\M$ was defined in \eqref{zM}. We will be 
 interested in $Z=Z_\M$, but no complications   arise from allowing larger 
 values of $Z$. As $N/(n-Z)\ge N/(n-Z_M)$,  \S\ref{Converge} implies that    $\zeta_{N,n}(s,w,\M)$ and  $\mathcal{Z}_{N,n}(s,w,\M)$ converge for $\re(s)>N/(n-Z)$.

Applying 
$a^{-s}\Gamma(s)=\int_0^\infty t^{s-1} e^{-at}\,dt\ \,(\re(a)>0,\ \re(s)>0)$ 
to  \eqref{ZetaIntDef} we find 
\begin{align}\nonumber 
\Gamma(s)^n \mathcal{Z}_{N,n}&(s,w,\M) = \int_{t\in (0,\infty)^N}\int_{T\in(0,\infty)^n} \prod_{j=1}^n  T_j^{s -1} e^{-T_j( w_j+t_1a_{1j}+\cdots+t_N a_{Nj})}\,dT\,dt\\ 
&=\int_{T\in(0,\infty)^n}\! \Big(\prod_{j=1}^n e^{-w_j T_j}\,T_j^{s-1}\Big) \int_{t\in(0,\infty)^N}\prod_{i=1}^{N} e^{-t_i\sum_{j=1}^n a_{ij}T_j}\,dt\,dT \nonumber\\
&=\int_{T\in(0,\infty)^n}\frac{\prod_{j=1}^n e^{-w_j T_j}\,T_j^{s-1}}{\prod_{i=1}^{N}\big(\sum_{j=1}^n a_{ij}T_j\big)}\,dT =:  \!\!\int_{T\in(0,\infty)^n}\!\! H(T,s,w,\M)\,dT,\label{ZwithGamma}
\end{align}
where $\re(s) >N/(n-Z )$ is assumed and $H$ stands for the integrand to its left. 

For a positive integer  $q \le n$, let $I_q^n$ be the set of injective functions 
 from $\{1,\ldots,q\}$ to $\{1,\ldots,n\}$. 
We regard $I_q^n \subset S_n=I_n^n$ by  requiring that $\gamma(q+1),\ldots,\gamma(n)$ 
be the elements of $\{1,\ldots,n\}\setminus\{\gamma(1),\ldots,\gamma(q)\}$ 
 listed in increasing order.\footnote{\ This is only for definiteness. Any   ordering of these $n-q$ numbers would do just as well below.} For $\gamma\in I_q^n$  let 
\[\Delta^\gamma\!:=\!\big\{ (T_1,\ldots,T_n)\in(0,\infty)^n  \mid T_{\gamma(1)}>\cdots> T_{\gamma(q)} ,\text{ and } T_{\gamma(q)}> T_{\gamma(l)} ~\text{ for } q<l\le n\big\}.\]
Up to sets of measure 0, $(0,\infty)^n =\bigcup _{\gamma\in  I_q^n}\Delta^\gamma$, and the union is disjoint. 

Picking $q:=Z+1$ and using \eqref{ZwithGamma}   we can write  
\begin{align}\nonumber
&\Gamma(s)^{n}\mathcal{Z}_{N,n}(s,w,\M)=\! \! \sum_{\gamma\in\InZ} \! \! \!  \int_{T\in\Delta^{\gamma}} \! \! \! H(T,s,w,\M)\,dT=\! \! \sum_{\gamma\in\InZ} \! \! \! \int_{T\in\Delta}\! \!H(T,s,w^{\gamma},\M^{\gamma})\,dT,\\ \label{InjDecomp}
&\Delta := \big\{(T_1,\ldots,T_n)\in (0,\infty)^n \mid\,T_1>\cdots> T_{Z+1},\ \ T_{Z+1}> T_\ell~\text{ for }\ell \ge Z+2\big\},\\ \nonumber 
&w^\gamma  :=(w_{\gamma(1)},\ldots,w_{\gamma(n)}),\quad \ \M^\gamma:=(a_{i\gamma(j)}),\quad \ \re(s)>\frac{N}{ n-Z},\quad \   Z_\M\le Z<n .  \end{align}
 As $\M$ satisfies Hypothesis $\Hh$  in \eqref{HypH} if and only if $\M^\gamma$ does and 
 $Z_\M=Z_{M^\gamma}$, \eqref{InjDecomp} shows that it suffices to 
analytically continue  $\int_{T\in \Delta } H(T,s,w ,\M)\,dT$ for all $w$ satisfying $\re(w_j)>0$ $ (1\le j\le n)$ and for all $\M$ satisfying $\Hh$.  

For each $j\ \,(1\le j\le n)$ let $F_j$ be the set of indices $i$ of rows   of $\M$ starting with exactly  $j-1$  zeroes.
Thus,   
\begin{equation}\label{DefFjM}
F_j=F_j(\M):= \big\{i\in\{1,2,\ldots,N\}\mid  a_{ik}=0\ \text{for}\ 1\le  k<j, \  a_{ij}\not=0\big\}.
\end{equation}
Since we have assumed that no row has more than $Z$ zeros, 
\begin{equation}\label{UnionFjM}
\{1,2,\ldots,N\}=\bigcup_{j=1}^n F_j,  \qquad F_j\cap F_{j'}=\varnothing\ \text{ for } j\not=j',\qquad F_j=\varnothing\ \text{ for } j>Z+1.
\end{equation}
We now change variables in \eqref{InjDecomp} from $ T\in\Delta $  to $ \sigma\in(0,\infty)\times(0,1)^{n-1}$ by letting
\begin{align}\label{NewVariables}
\sigma&= (\sigma_1, \sigma_2,\ldots,\sigma_n )=:(\sigma_1,\sigma'),\qquad\sigma_k:=
\begin{cases}
T_1&\  \text{if }  k=1,\\
\frac{T_k}{T_{k-1}}&\  \text{if }2\le k\le Z+1,\\
\frac{T_{k}}{T_{Z+1}}&\  \text{if }Z+2\le k\le n .
\end{cases} 
\end{align}
We can write $T $ in terms of $\sigma$  as
\begin{align}\label{Tfromsigma}
T_k= 
\begin{cases}
T_1\cdot\frac{T_2}{T_1}\cdot\frac{T_3}{T_2}\cdots\frac{T_k}{T_{k-1}}=  \prod_{j=1}^k\sigma_j&\  \text{if }1\le k\le Z+1,\\
 \sigma_k\cdot T_{Z+1}=\sigma_k\cdot\prod_{j=1}^{Z+1}\sigma_j&\  \text{if } Z+2\le k\le n .
\end{cases}  
\end{align}
Hence $\frac{\partial T_k}{\partial\sigma_j}=0$ for $j>k$, which implies that  the Jacobian determinant $J$  is simply
\[
J= \prod_{k=1}^n\frac{\partial T_k}{\partial\sigma_k}  =
\Big(\prod_{k=1}^{Z+1}
  \prod_{j=1}^{k-1}\sigma_{j}\Big)\cdot\Big(\prod_{k=Z+2}^n  \prod_{j=1}^{Z+1}\sigma_{j}\Big)=\prod_{j=1}^{Z+1}\sigma_{j}^{n-j},
\]
where the last equality  uses induction on $n\ge Z+1$.
As $T_k,\sigma_j>0$,   \eqref{Tfromsigma} yields 
\[
\prod_{k=1}^{n}T_{k}^{s-1}=\Big(\prod_{k=1}^{Z+1}
\prod_{j=1}^{k}\sigma_{j}^{s-1}\Big) 
\Big(\prod_{k=Z+2}^{n}\! \! \!\sigma_k^{s-1} \prod_{j=1}^{Z+1}\sigma_{j}^{s-1}\Big)=\Big(\prod_{j=1}^{Z+1}\sigma_{j}^{(1+n-j)(s-1)}\Big)\Big(\prod_{j=Z+2}^{n}\! \!\sigma_j^{s-1}\Big).\]
Using \eqref{UnionFjM} and writing $|F_j|$ for the cardinality of $F_j(\M)$ in \eqref{DefFjM}, we get
\begin{align*}
\prod_{i=1}^{N}{\textstyle{\big(\sum_{j=1}^{n}{a_{ij}T_{j}}\big)}}&=\prod_{j=1}^{Z+1}\prod_{i\in F_{j}}\!{\textstyle{\big(\sum_{k=1}^{n}{a_{ik}T_{k}}}}\big)=\prod_{j=1}^{Z+1}\prod_{i\in F_j}\!\big(a_{ij}T_j+{\textstyle{\sum_{k=j+1}^n a_{ik}T_k\big)}}\\
&=\prod_{j=1}^{Z+1}T_j^{|F_j|} \prod_{i\in F_j}\! \big(a_{ij}+{\textstyle{\sum_{k=j+1}^{n}a_{ik}\frac{T_k}{T_j} \big)}}\\
&= y(\sigma')\cdot\prod_{j=1}^{Z+1}\sigma_{j}^{\sum_{k=j}^{Z+1}|F_{k}|},
\end{align*}
where $\sigma':=(\sigma_2,\ldots ,\sigma_n)$ and $y(\sigma') =y_{\M,Z}(\sigma') $ is given by
\begin{align} \label{yDef}
 y(\sigma'):=\prod_{j=1}^{Z+1}\prod_{i\in F_{j}}\!\big(a_{ij}+{\textstyle{\sum_{k=j+1}^{Z+1}a_{ik}\prod_{r=j+1}^{k}
\sigma_r+(\sum_{k=Z+2}^{n}a_{ik}\sigma_k)	\prod_{r=j+1}^{Z+1}\sigma_{r}\big)}}.
\end{align}
With $H$  as in \eqref{ZwithGamma}, let 
\begin{align} \label{IDef}
I(s)=I(s,w)=I_{\M,Z}(s,w):=\int_{T\in \Delta } H(T,s,w,\M)  \, dT.
\end{align}
From  our change of variable  computations, 
 and   $\sum_{j=1}^{Z+1}  |F_{j}|=N$ $\big($see  \eqref{UnionFjM}$\big)$,  we 
 obtain  
\begin{align}\label{BasicInt}
&I(s)=I(s,w)=\int_{\sigma_{1}=0}^{\infty}\!\!\sigma_{1}^{ns-N-1}  
 e^{-\sigma_{1}w_{1}}\!\int_{\sigma'\in(0,1)^{n-1}}\!\!g(\sigma ) \prod_{j=2}^n\sigma_{j}^{s_j-1} \  d\sigma_n\cdots d\sigma_2   d\sigma_1, 
\\
&  s_{j}:=
\begin{cases}
(n+1-j)s-\sum_{k=j}^{Z+1}|F_{k}| &\  \text{if }1\le j\le Z+1,\\
s & \  \text{if }Z+2\le j\le n.
\end{cases} \label{sjDef}
\\
&g(\sigma )=g_{w',\M,Z}(\sigma_{1},\sigma'):=\frac{\prod_{j=2}^{Z+1} 
e^{-w_{j}\sigma_{1}\sigma_{2}\cdots\sigma_{j}}\cdot
\prod_{\ell=Z+2}^{n}
e^{-w_\ell\sigma_\ell\sigma_1\sigma_{2}\cdots\sigma_{Z+1}}}{y(\sigma')},\label{gDef}
\end{align}
where $  w= (w_1,w_2,\ldots,w_n)=:(w_1,w')$, so  $g$   depends neither  on $w_1$ nor on  $s$. Note that  $s_1=ns-N$, independently of the pattern of zero entries of $\M$ $\big($see
 \eqref{UnionFjM}$\big)$.

\begin{lemma}\label{MeromorphyZ} Assume $\M$ satisfies Hypothesis 
 $\Hh$  in \eqref{HypH},  $Z<n$ is a non-negative  integer such that 
no row of $\M$ has more than $Z $ vanishing entries, and    let $s_j$ be as in \eqref{sjDef}. 
Then $I(s,w)$ in \eqref{BasicInt} is analytic   for $\re(s)>\frac{N}{n-Z}$ and $\re(w_k)>0$, has a meromorphic continuation to $(s,w)\in\C \times\{w\in\C^n|\,\re(w_k)>0,\ 1\le k\le n\}$,  and
\begin{align}\label{IMeromorphy}
\frac{I(s,w)}{\Gamma(ns-N) } \prod_{p=0}^{M}\prod_{j=2}^n (p+s_j) 
\end{align}
is analytic in $(s,w)$ for    any integer $M\ge N$  provided  $\re(s)>\frac{(N-M)}{n}$ and $\re(w_k)>0$.
\end{lemma}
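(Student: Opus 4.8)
The plan is to analyze the integral $I(s,w)$ from \eqref{BasicInt} by separating the $\sigma_1$-integration from the $\sigma'$-integration over the cube $(0,1)^{n-1}$. First I would observe that $g(\sigma)=g_{w',\M,Z}(\sigma_1,\sigma')$ is, for fixed $\sigma'\in(0,1)^{n-1}$ and fixed $w'$ with $\re(w_k)>0$, a smooth (indeed entire) function of $\sigma_1\in[0,\infty)$ that decays exponentially as $\sigma_1\to\infty$: the denominator $y(\sigma')$ does not vanish because Hypothesis $\Hh$ forces each factor $a_{ij}+\sum_k a_{ik}(\cdots)$ to have strictly positive real part on the closed cube, and the numerator is a product of exponentials $e^{-w_j\sigma_1(\cdots)}$ with positive real part in the exponent. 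Hence for each fixed $\sigma'$ the inner $\sigma_1$-integral is, up to the factor $\Gamma(ns-N)$, given by a Mellin-type transform: expanding $g(\sigma)=\sum_{p\ge 0}c_p(\sigma',w')\,\sigma_1^p$ in a Taylor series about $\sigma_1=0$ and integrating term by term against $\sigma_1^{ns-N-1}e^{-\sigma_1 w_1}$ produces poles only at $ns-N\in\{0,-1,-2,\dots\}$, i.e. exactly the poles of $\Gamma(ns-N)$. This is the standard ``subtract the Taylor polynomial'' device used in \cite[\S2]{FR}, which the excerpt says it is generalizing.

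Next I would carry this out quantitatively. Fix an integer $M\ge N$. Write $g(\sigma)=\sum_{p=0}^{M}c_p(\sigma',w')\,\sigma_1^p + R_M(\sigma_1,\sigma',w')$ with $R_M=O(\sigma_1^{M+1})$ uniformly for $\sigma'\in(0,1)^{n-1}$ and $w'$ in compacta. The tail $R_M$-part of the $\sigma_1$-integral converges and is analytic for $\re(ns-N)>-(M+1)$, i.e. $\re(s)>(N-M)/n$, after also doing the $\sigma'$-integral. For the polynomial part, $\int_0^\infty \sigma_1^{ns-N-1+p}e^{-\sigma_1 w_1}\,d\sigma_1=\Gamma(ns-N+p)\,w_1^{-(ns-N+p)}=\Gamma(ns-N)\,w_1^{-(ns-N+p)}\prod_{q=0}^{p-1}(ns-N+q)$, which is $\Gamma(ns-N)$ times an entire function of $s$ (and holomorphic in $w_1$). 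So dividing $I(s,w)$ by $\Gamma(ns-N)$ removes all the poles coming from the $\sigma_1$-integration, leaving only the poles of the $\sigma'$-integral. The latter come from integrating $\prod_{j=2}^n \sigma_j^{s_j-1}$ against the (smooth, in particular real-analytic near each face) coefficient functions $c_p(\sigma',w')$ over the cube: each variable $\sigma_j$ near $0$ contributes, after expanding $c_p$ in a Taylor series in $\sigma_j$ up to order $M$, simple poles at $s_j\in\{0,-1,\dots,-M\}$ plus a remainder analytic for $\re(s_j)>-(M+1)$. Multiplying by $\prod_{p=0}^{M}\prod_{j=2}^n(p+s_j)$ clears exactly these, and one checks the resulting remainders are analytic in the stated range $\re(s)>(N-M)/n$ (note $s_j$ is an increasing-in-$M$-irrelevant affine function of $s$ with positive leading coefficient $n+1-j\ge 1$ for $j\le Z+1$, and $s_j=s$ for $j\ge Z+2$, so $\re(s)>(N-M)/n$ implies $\re(s_j)>-M$ in all cases, using $M\ge N$). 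Assembling the pieces over the finitely many $\gamma\in\InZ$ in \eqref{InjDecomp} and over $p=0,\dots,M$ gives the meromorphic continuation and the analyticity of \eqref{IMeromorphy}.

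The one genuinely delicate point — the main obstacle — is the uniformity and analyticity of the $\sigma'$-integral of the Taylor remainders, together with the interchange of summation, differentiation, and integration needed to justify the term-by-term Mellin expansion. Concretely, one must show that $\partial^\alpha_{\sigma'}\big(\sigma_1^{-p}\partial^p_{\sigma_1}g\big)$ is bounded on $[0,\infty)\times(0,1)^{n-1}$ uniformly for $w'$ in compacta, so that all the remainders are genuinely integrable and depend holomorphically on $(s,w)$; this hinges on $y(\sigma')$ staying bounded away from $0$ on the closed cube, which is where Hypothesis $\Hh$ does the essential work. Everything else is bookkeeping: tracking which product of linear factors $(p+s_j)$ is needed to kill which pole, and verifying that $M\ge N$ guarantees the claimed half-plane. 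I would organize the write-up as (1) non-vanishing of $y$ and the resulting exponential-decay/smoothness estimates on $g$; (2) the $\sigma_1$-Mellin step isolating $\Gamma(ns-N)$; (3) the $\sigma'$-cube step isolating $\prod_{p,j}(p+s_j)$; (4) reassembly and the half-plane count.
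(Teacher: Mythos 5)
Your proposal is correct and, at the level of the governing ideas, mirrors the paper's proof (isolating $\Gamma(ns-N)$ from the $\sigma_1$-Mellin integral, isolating $\prod_{p,j}(p+s_j)$ from the $\sigma_j$-integrals over $(0,1)$, and checking the half-plane count under $M\ge N$); but the mechanism you use is different. The paper's proof of Lemma~\ref{MeromorphyZ} first integrates by parts $M+1$ times in each of $\sigma_n,\ldots,\sigma_2$, so the factor $T_M(s)=\prod_{p=0}^M\prod_{j=2}^n(s_j+p)^{-1}$ falls out automatically from the boundary terms (cf.\ \eqref{Icontinued}--\eqref{IntegratedByPartsI}), and only afterwards performs a one-variable Taylor expansion with integral remainder in $\sigma_1$ to surface the $\Gamma(ns-N+\ell)$ factors (\eqref{TaylorExp}--\eqref{MeroCont}). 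You instead propose to run Taylor-with-remainder in $\sigma_1$ first and then Taylor-with-remainder in $\sigma_2,\ldots,\sigma_n$; since $\int_0^1\sigma_j^{s_j-1+p}\,d\sigma_j=1/(s_j+p)$, the same poles appear and the same product clears them, so the two routes are essentially equivalent — integration by parts is a packaged form of "subtract the Taylor polynomial." Your version is in fact the method the paper itself uses later, via the multi-variable Taylor expansion \eqref{ManyVariableTaylorExp}, in the proof of Theorem~\ref{AppliedFiniteTaylor}. Two small remarks: the exponential decay of $g$ in $\sigma_1$ is only pointwise in $\sigma'\in(0,1)^{n-1}$ (not uniform on the closed cube, since the exponents degenerate as $\sigma_2\to 0$), but this is harmless because, as you use, the kernel $e^{-\sigma_1 w_1}$ already supplies the decay needed — what one actually needs, and what the paper records, is that $g$ and its derivatives are polynomially bounded uniformly on $[0,\infty)\times[0,1]^{n-1}$; and your identification of the nonvanishing of $y(\sigma')$ on the closed cube as the place where Hypothesis~$\Hh$ is essential is exactly right.
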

\noindent Assuming the lemma for now, we deduce the meromorphic continuation of $\mathcal{Z}_{N,n} $.
\begin{proposition}\label{MeromZ} If $\M$ and  $Z$ are as in Lemma $\mathrm{\ref{MeromorphyZ}}$, then  
 $(s,w)\to\mathcal{Z}_{N,n}(s,w,\M)$  in \eqref{ZetaIntDef} has a meromorphic continuation  to 
$\C \times\{w\in\C^n|\,\re(w_k)>0,\ 1\le k \le n\}$, and 
 $s\to\mathcal{Z}_{N,n}(s,w,\M)$ has poles of order at most $Z+1$.
Poles may occur only at rational  numbers   $\tilde{s}\le N/\big(n-Z\big)$,  $\tilde{s}=a/b$  for some $a,b\in\Z$ and $n-Z\le b\le n$.

Moreover, $\mathcal{Z}_{N,n}(s,w,\M)$ is analytic  at $(-\ell,w)$ for all non-negative integers $\ell$ and all $w\in\C^n$ with $\re(w_k)>0 \, \ (1\le k \le n)$.
\end{proposition}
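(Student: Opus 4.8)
Granting Lemma~\ref{MeromorphyZ}, the plan is to obtain the meromorphic continuation of $\mathcal{Z}_{N,n}(s,w,\M)$ for free by expressing it through the integrals $I_{\M^{\gamma},Z}$ of \eqref{IDef}. First I would record the identity that results from substituting \eqref{ZwithGamma} into the decomposition \eqref{InjDecomp} (taken with $q=Z+1$): for $\re(s)>N/(n-Z)$ and $\re(w_k)>0$,
\[
\Gamma(s)^{n}\,\mathcal{Z}_{N,n}(s,w,\M)=\sum_{\gamma\in I_{Z+1}^{n}}I_{\M^{\gamma},Z}(s,w^{\gamma}).
\]
Since $\M$ satisfies Hypothesis $\Hh$ exactly when $\M^{\gamma}$ does, with $Z_{\M}=Z_{\M^{\gamma}}$, Lemma~\ref{MeromorphyZ} applies to each summand; because $1/\Gamma(s)^{n}$ is entire, dividing through gives the desired meromorphic continuation of $\mathcal{Z}_{N,n}$ to $\C\times\{w\in\C^{n}\mid\re(w_k)>0\}$. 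For each integer $M\ge N$ one gets such a continuation to $\{\re(s)>(N-M)/n\}$ from \eqref{IMeromorphy}, and these glue as $M\to\infty$ by uniqueness of analytic continuation.

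Next I would read off the poles. By \eqref{IMeromorphy}, on $\{\re(s)>(N-M)/n\}$ the function $I_{\M^{\gamma},Z}(s,w^{\gamma})$ equals $\Gamma(ns-N)$ times a holomorphic function divided by $\prod_{p=0}^{M}\prod_{j=2}^{n}(p+s_{j}^{\gamma})$, where $s_{j}^{\gamma}$ denotes \eqref{sjDef} computed for $\M^{\gamma}$. Hence $\mathcal{Z}_{N,n}$ can only have poles where $\Gamma(ns-N)$ does, namely at $s=(N-m)/n$ with $m\in\Nc$, or where some linear factor $p+s_{j}^{\gamma}$ vanishes: for $2\le j\le Z+1$ that is at $s=\big(\sum_{k=j}^{Z+1}|F_{k}(\M^{\gamma})|-p\big)/(n+1-j)$, and for $Z+2\le j\le n$ (where $s_{j}^{\gamma}=s$) at the non-positive integers $s=-p$. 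Since $n+1-j$ runs over $\{n-Z,\dots,n-1\}$ in the first case, $\sum_{k\ge j}|F_{k}(\M^{\gamma})|\le N$, and $p\ge0$, all these points are rational, are $\le N/(n-Z)$, and can be written $a/b$ with $n-Z\le b\le n$; this is the location statement.

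Then I would bound the order of a pole $\tilde s$. The factor $\Gamma(ns-N)$ contributes at most a simple pole there, while $\prod_{p,j}(p+s_{j}^{\gamma})$ vanishes at $\tilde s$ to order equal to the number of vanishing factors, which is at most one for each $j\in\{2,\dots,n\}$ (the single candidate $p=-s_{j}^{\gamma}(\tilde s)$, when it is a non-negative integer). If $\tilde s$ is not a non-positive integer, the factors with $Z+2\le j\le n$ cannot vanish, leaving at most $Z$; as $1/\Gamma(s)^{n}$ is holomorphic and non-zero at $\tilde s$, this yields a pole of $\mathcal{Z}_{N,n}$ of order at most $1+Z=Z+1$. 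If instead $\tilde s=-\ell$ with $\ell\in\Nc$, then at most $Z$ factors with $2\le j\le Z+1$ and the $n-Z-1$ factors with $Z+2\le j\le n$ can vanish, a total of at most $n-1$; choosing $M>N+n\ell$ so that \eqref{IMeromorphy} is valid near $\tilde s$, the sum $\sum_{\gamma}I_{\M^{\gamma},Z}(s,w^{\gamma})$ has a pole of order at most $1+(n-1)=n$ at $s=-\ell$, and this is exactly annihilated by the order-$n$ zero of $1/\Gamma(s)^{n}$ there. Hence $\mathcal{Z}_{N,n}$ is holomorphic at $(-\ell,w)$ for all $\ell\in\Nc$ and all $w$ with $\re(w_k)>0$, and has poles of order at most $Z+1$ at every other point.

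The step I expect to be the real content — the rest being bookkeeping of the above kind — is the last cancellation: one must know that the pole of $\sum_{\gamma}I_{\M^{\gamma},Z}$ at a non-positive integer never exceeds order $n$, so that the $n$-fold zero of $1/\Gamma(s)^{n}$ removes it entirely. This hinges on the precise shape \eqref{sjDef} of the exponents $s_{j}$ — exactly $n-1-Z$ of them equal $s$ while the remaining $Z$ have positive integer slopes in $\{n-Z,\dots,n-1\}$ — together with the simplicity of the poles of $\Gamma(ns-N)$.
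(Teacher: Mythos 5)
Your proposal is correct and follows essentially the same route as the paper's proof: reduce to the summands $I_{\M^{\gamma},Z}$ via $\Gamma(s)^{-n}\sum_{\gamma}I_{\M^{\gamma},Z}(s,w^{\gamma})$, invoke Lemma~\ref{MeromorphyZ}, read off the pole locations and orders from the explicit factorization \eqref{IMeromorphy}, and note that the possible order-$n$ pole of $\sum_{\gamma}I_{\M^{\gamma},Z}$ at a non-positive integer is cancelled by the order-$n$ zero of $\Gamma(s)^{-n}$. Your accounting ($1$ from $\Gamma(ns-N)$, at most $Z$ from $j\in\{2,\dots,Z+1\}$, at most $n-Z-1$ from $j\in\{Z+2,\dots,n\}$) matches the paper's argument and is if anything stated a bit more carefully.
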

\noindent  If we take $Z $ minimal, \ie $Z:= Z_\M$, we find of course the best  information on the order and location of the poles.
\begin{proof}
Since $M\ge N$ can be taken arbitrarily large in Lemma \ref{MeromorphyZ}, it suffices to prove the claims in   Proposition \ref{MeromZ} when $\re(s)>(N-M)/n$. By \eqref{InjDecomp} and \eqref{IDef},  
\begin{equation}\label{GammaI}
\mathcal{Z}_{N,n}(s,w,\M)=\Gamma(s)^{-n}\cdot\sum_{\gamma\in\InZ}I_{\M^\gamma,Z}(s,w^\gamma).
\end{equation} 
Thus, it suffices to prove that $\Gamma(s)^{-n}I(s,w)=\Gamma(s)^{-n}I_{\M,Z}(s,w )$ has   the properties of $\mathcal{Z}_{N,n}$ in Proposition \ref{MeromZ}. Using    \eqref{sjDef}      we can write the entire function in    \eqref{IMeromorphy}  as
\begin{align}\label{IMeromorphybis}
\frac{I(s,w)}{\Gamma(ns-N) } \Big(\prod_{p=0}^{M}\prod_{j=2}^{Z+1} \big(p+(n+1-j)s-{\textstyle{\sum_{k=j}^{Z+1}|F_{k}|}}\big) \Big)\prod_{p=0}^{M}(p+s)^{n-Z-1}.
\end{align}
Since $\Gamma(s)^{-1}$ is an entire function vanishing  only at non-positive integers, from   \eqref{IMeromorphybis}  it is clear that a singularity  $(\tilde{s},\tilde{w})$ of 
 $I(s ,w)$ can only occur  when $\tilde{s}=-p$ is a non-positive integer, 
or $p+(n+1-j)\tilde{s}-\sum_{k=j}^{Z+1}|F_k|=0$, or $n\tilde{s}-N$ is a non-positive integer.
Thus   $\tilde{s}$ has an expression $\tilde{s}=a/b$, $a,b\in\Z$, where $n-Z\le b\le n$, as claimed. 
 Suppose first that the pole $\tilde{s}=a/b$ is not a non-positive integer, so that the right-most 
 product in \eqref{IMeromorphybis}   does not vanish at $\tilde{s}$. 
 Thus $1/\Gamma(ns-N)$ or the double product in \eqref{IMeromorphybis} vanishes at $\tilde{s}$. 
But  for each of the $Z$ values of $j$ in   \eqref{IMeromorphybis}, 
at most one index $p$ can correspond to a factor vanishing at $\tilde{s}$, and only to order 1.
Since the factor $1/\Gamma(ns-N)$ likewise vanishes to order at 
most one, the poles of $s\to I(s,w)$ are of order at most $Z+1$, except possibly at a non-positive integers $\tilde{s}$ where the vanishing 
 could be to order $n$ due to the last product in \eqref{IMeromorphybis}.
But $\Gamma(s)^{-n} $ vanishes to order $n$ at non-positive 
 integers, so $\Gamma(s)^{-n}\cdot I(s,w)$ is regular there.
Proposition \ref{MeromZ}  now follows from \eqref{GammaI}.
\end{proof}
\begin{proof}[Proof of Lemma  
 $\mathrm{\ref{MeromorphyZ}}$.]
Using (\ref{yDef}-\ref{gDef})   it is clear that $I(s,w)$ is analytic in $(s,w)$ if $\re(w_j)>0$, $\re(ns-N)>0,$   and $\re(s_j)>0\, \  (2\le j\le Z+1)$. The inequalities on $s$ and $s_j$ hold if $\re(s)>N/(n-Z)$ as $\sum_{k=j}^{Z+1}|F_k|\le\sum_{k=1}^n|F_j|=N$ by \eqref{UnionFjM}. To get the meromorphic continuation of $I(s,w)$, we therefore assume always that $\re(w_j)>0\, \  (1\le j\le n)$, and for now that $\re(s)>N/(n-Z)$.

Since the integral expression \eqref{BasicInt} for $I$ does not in general converge for 
 $\re(s)\le N/(n-Z)$, we will integrate by parts to raise  the  exponents   of the $\sigma_j \, \  (1\le j\le n)$ in the integrand in \eqref{BasicInt}.
Integrating by parts over $\sigma_{n}$ in \eqref{BasicInt}, we get for $\re(s)>N$ (so 
 $\re(s_n)>0$ and  $g =g_{w',\M,Z} $ as in \eqref{gDef}),
\begin{align*}
&\int_{\sigma_{n}=0}^{1}\sigma_n^{s_n-1}  g(\sigma )\,d\sigma_{n}=\frac{g(\sigma_1, \ldots,\sigma_{n-1},1)}{s_n}-\frac{1}{s_n}\int_{\sigma_n=0}^1\sigma_n^{s_n} \frac{\partial g}{\partial\sigma_n}(\sigma )\,d\sigma_{n}\\
&=\frac{1}{s_n}\int_{\sigma_n=0}^1\! \!\sigma_n^{s_n} \big((s_n+1)
g(\sigma_1, \ldots,\sigma_{n-1},1)-\frac{\partial g}{\partial\sigma_{n}}(\sigma )\big)\,d\sigma_n =\frac{1}{s_n}\int_{\sigma_{n}=0}^1\! \!\sigma_n^{s_n}  g_{0}(s_n,\sigma  )\,d\sigma_{n},
\end{align*}
with the obvious definition of $g_{0}$.
Repeating the integration by parts $M$ more times, 
\[\int_{\sigma_{n}=0}^{1}\sigma_{n}^{s_n-1}  g(\sigma ) \,d\sigma_{n}=\Big(\prod_{p=0}^{M}\frac{1}{s_n+p}\Big) \int_{\sigma_{n}=0}^{1} \sigma_{n}^{s_n+M}  g_{M}(s_n,\sigma )\,d\sigma_n,\]
where $g_M$ is a finite sum of $\sigma_{n}$-derivatives of $g$ and some specializations of them at $\sigma_n=1$, with coefficients which are polynomials in $s$.
The same procedure applied to $\sigma_{n-1},\ldots, \sigma_{2}$ replaces each 
$\sigma_j^{s_j-1}\ \,(2\le j\le n)$  in \eqref{BasicInt} by $\sigma_j^{s_j+M}$.  
We conclude that
\begin{equation}\label{Icontinued}
I(s,w)=T_{M}(s)\int_{\sigma_1=0}^{\infty}\sigma_1^{ns-N-1}  e^{-\sigma_1w_1}\int_{\sigma'\in(0,1)^{n-1}}g_*(s,\sigma ) \prod_{j=2}^n\sigma_j^{s_j+M}\  d\sigma'\,d\sigma_1,
\end{equation}
where   
\begin{equation}\label{IntegratedByPartsI}
T_{M}(s):=  \prod_{p=0}^M\prod_{j=2}^n\frac{1}{s_j+p},
\qquad\sigma=(\sigma_1,\sigma'),\qquad g_*(s,\sigma)=\sum_{u}c_{u}(s)f_u(\sigma) ,
\end{equation}
  the $c_{u}(s)=c_{u,w,\M}(s)$ being polynomials in $s$ with coefficients depending on $w, \M$ and $Z$, and the $f_u$ being 
 higher partial derivatives of $g$ with respect to the $\sigma_j$, with
 possibly some of the $\sigma_j$ specialized to the value $1$. Lastly,  the $u$ range over some finite index set.

Next we raise the exponent of $\sigma_1$. The MacLaurin expansion of order $M$ in the single variable $\sigma_{1}$ of $f_u$, with the integral form of the remainder, gives 
\begin{equation}\label{TaylorExp}
f_u(\sigma_1,\sigma' )=\sum_{\ell=0}^M \frac{\sigma_{1}^\ell}{\ell!} \frac{\partial^\ell f_u}{\partial\sigma_{1}^\ell}(0,\sigma' )+\frac{\sigma_{1}^{M+1}}{M!}\int_{y=0}^{1}(1-y)^{M} \frac{\partial^{M+1}f_u}{\partial\sigma_{1}^{M+1}}(\sigma_{1}y,\sigma')\,dy.
\end{equation}
From \eqref{gDef} and \eqref{IntegratedByPartsI} we see that $|f_u(\sigma)|$ is bounded by a polynomial (depending on $u,s,w,\M$) in $\sigma_{1}$, uniformly for $(\sigma_{1},\sigma')\in[0,\infty)\times   [0,1]^{n-1}$.
Substituting   \eqref{TaylorExp} into \eqref{IntegratedByPartsI} and then into \eqref{Icontinued}, we find for $\re(s)>N/(n-Z)$,   
\begin{equation}\label{MeroCont}
\begin{aligned}
&I(s,w)=T_{M}(s)\sum_{u}c_{u}(s)\bigg(\sum_{\ell=0}^{M} 
\frac{\Gamma(ns-N+\ell)}{\ell! \! \; w_1^{ns-N+\ell}}\int_{\sigma'}   \frac{\partial^{\ell}f_u}{\partial\sigma_{1}^\ell}(0,\sigma')  
\prod_{j=2}^n\sigma_j^{s_j+M} \,d\sigma' \\
& \  + \int_{\sigma_{1}=0}^{\infty}\!\! e^{-\sigma_{1}w_{1}}\sigma_{1}^{ns-N+M}\!\!\!\int_{\sigma' }  \prod_{j=2}^n\sigma_j^{s_j+M}\!\!\int_{y=0}^{1}\!\!\!\frac{(1-y)^{M}}{M!} \frac{\partial^{M+1}f_u}{\partial\sigma_{1}^{M+1}}(\sigma_{1}y,\sigma')\,dy \,d\sigma\!\bigg).
\end{aligned}
\end{equation}
We now actually have our meromorphic continuation. Indeed, for  all  the integrals   in \eqref{MeroCont}  to be analytic in $s$,  it suffices to have
  $\re(s_j+M)>0\ \,( 1\le j\le n)$. If $Z+2\le j\le n$, this means 
$\re(s)>-M$, while for $1\le j\le Z+1$ by \eqref{IntegratedByPartsI} 
and \eqref{UnionFjM}, 
\[\re(s_j+M)=(n+1-j)\re(s)+M  - \sum_{k=j}^{n}|F_k|\ge(n+1-j)\re(s)+M-N.\]
Since $M\ge N$ in Lemma \ref{MeromorphyZ} by assumption, it  follows that all integrals in \eqref{MeroCont}  are analytic in the right half-plane $\re(s)>(N-M)/n$.
As the  terms preceding the integral on the first line of  \eqref{MeroCont}    become entire functions of $s$ on being multiplied  by $ (T_M(s)\Gamma(ns-N)\big)^{-1}$, we have proved Lemma  \ref{MeromorphyZ}. 
\end{proof}
On reviewing the proof  we see that the main point was to change variables from $T$ to $\sigma $ in \eqref{BasicInt} so that the singularity $\big($for small $\re(s)\big)$ of $H$ at $T=0$ takes a simpler form.
After that the only thing we need about $g(\sigma)$ in the new integral is its smoothness and that its partial derivatives are dominated by the exponential term $e^{-w_{1}\sigma_{1}}$ for $\sigma_{1}\in[0,\infty)$.   
 
\subsection{Analytic continuation of the  zeta series $\zeta_{N,n}$. }\label{Smercont}
We note that 
if we assume $\re(a_{ij})>0$ for all $i,j$ in  \eqref{ZetaSerDef}, as Shintani  did \cite{Shi},      then $s\to\zeta_{N,n}(s,w,\M)$
 has only simple poles \cite[\S3]{FR}.\footnote{\  However, even in the Shintani case, $\zeta_{N,n}$ will
 have  infinitely many poles if $n>1$. All  poles are rational numbers lying to the left 
 of the abscissa of convergence  \cite[Prop.\ 3.1]{FR}.} However, as Hypothesis $\Hh$ only assumes 
 $\re(a_{ij})\ge0$,    $ \zeta_{N,n} $ can have poles of higher order. The simplest example is 
 $\zeta(s)^n=\zeta_{n,n}(s,\mathbf{1},\text{I}_n)$, where  $\mathbf{1}\in\C^n$ has all entries   1,  and $\text{I}_n$ is the $n\times n$ identity matrix. Similarly, products of   Shintani-Barnes zeta functions are  of  the form  $\zeta_{N,n}$, so such   products can  have  quite a variety of poles   \cite[\S3]{FR}.

We now show that the proof of the analytic continuation of the zeta integral $\mathcal{Z}_{N,n}$ given in \S\ref{Zmercont}   applies  almost verbatim  to the zeta series $\zeta_{N,n}$.
The only difference will turn out to be that the function $g$ in \eqref{gDef} will be replaced by a slightly more complicated $\widetilde{g}$.  On letting $\N_0:=\N\cup\{0\}$ we have for $\re(s)>N/\big(n-Z\big)$, 
\begin{align}\nonumber 
& \Gamma(s)^{n}\cdot\zeta_{N,n}(s,w,\M)=\sum_{k\in \N_0^N}\int_{T\in(0,\infty)^n} \prod_{j=1}^n  T_j^{s -1}\cdot e^{-T_j( w_j+k_1a_{1j}+\cdots+k_N a_{Nj}) }\,dT 
\\ 
&=\int_{T\in(0,\infty)^n}  \Big(  \prod_{j=1}^n e^{-w_j T_j}\cdot T_j^{s -1} \Big) \Big( \sum_{k\in \N_0^N}   \prod_{i=1}^{N}   e^{-k_i\sum_{j=1}^n  a_{ij}T_j }  \Big)\,dT \nonumber 
 \\ &
= \int_{T\in(0,\infty)^n}  \Big(  \prod_{j=1}^n   e^{-w_j T_j}\cdot T_j^{s -1} \Big)  \Big( \prod_{i=1}^{N}  \sum_{k_i=0}^\infty    e^{-k_i\sum_{j=1}^n  a_{ij}T_j }  \Big)\,dT 
\nonumber 
\\
&
=\int_{T\in(0,\infty)^n}  \frac{\prod_{j=1}^n   
 e^{-w_j T_j}\,T_j^{s -1} }{ \prod_{i=1}^{N}\!\big( 1-  e^{-\sum_{j=1}^n   a_{ij}T_j } \big) }\,dT   \nonumber 
\\ &
= \int_{T\in(0,\infty)^n}  \frac{\prod_{j=1}^n   e^{-w_j T_j}\,T_j^{s -1}}{ \prod_{i=1}^{N}\!\big(\! \sum_{j=1}^n   a_{ij}T_j \big)}\cdot\Phi(T) \, dT=:  \int_{T\in(0,\infty)^n} \widetilde{H}(T,s,w,\M)  \, dT,\label{ZetawithGamma}
\end{align}
where $\widetilde{H}$  stands for the integrand to its left and 
\begin{equation}\label{Phidef}
\Phi(T) := \prod_{i=1}^{N} \varphi\Big(\sum_{j=1}^n   a_{ij}T_j  \Big)\ \ \ \big(  T\in(0,\infty)^n\big),\quad\ \ \ \ \varphi(z):=\frac{z}{1-e^{-z}}\  \ \ \big(\re(z)>0\big).
\end{equation}
Note that  by Hypothesis  $\Hh$  in \eqref{HypH}, 
  $\Phi:(0,\infty)^n\to\C$  extends  as a  smooth function to $(-\varpsilon,\infty)^n$ for some $\varepsilon>0$.
    Also,   partial derivatives $\partial^\alpha$ of any order satisfy  $|\partial^\alpha(\Phi)(T)|\le H_\alpha(\|T\|)$  for all $T\in (-\varpsilon,\infty)^n$, where   $H_\alpha(\|T\|)$ is some polynomial in the Euclidean norm of $T$.  Lastly,  we note that
 $\Phi(T)=\Phi_\M(T)$  depends on $\M=(a_{ij})$ but not on   $w$ or $s$.
\vskip.2cm
As in \eqref{GammaI} and \eqref{IDef}, we have from \eqref{ZetawithGamma}
\begin{equation}
 \begin{aligned} 
\zeta_{N,n}(s,w,\M)&=\Gamma(s)^{-n}  \sum_{\gamma\in\InZ}\widetilde{I}_{\M^\gamma,Z}(s,w^\gamma),\\ \widetilde{I}(s)&=\widetilde{I}_{\M,Z}(s,w):=  \int_{T\in \Delta }  \widetilde{H}(T,s,w,\M)\,dT.  \label{GammaTildeI}
\end{aligned}
\end{equation}
The change of variables from $T$ to $\sigma$ in \eqref{NewVariables}     applied to    \eqref{GammaTildeI} yields 
\begin{equation}\label{ZetaBasicInt}
\widetilde{I}(s)=\int_{\sigma_1=0}^{\infty}\sigma_1^{ns-N-1}\cdot e^{-\sigma_1 w_1}\!\int_{\sigma' }\!\widetilde{g}(\sigma_1,\sigma')\cdot\prod_{j=2}^n\sigma_{j}^{s_j-1 } d\sigma'\,d\sigma_1,
\end{equation}
\begin{equation}\label{tildegDef}
\widetilde{g}(\sigma ) 
:= g(\sigma)\,\Phi(\sigma_1,\sigma_1\sigma_2,\ldots,\sigma_1\cdots\sigma_{Z+1},\sigma_{Z+2}{\textstyle{\prod_{j=1}^{Z+1}\sigma_j}},\ldots,\sigma_{n}{\textstyle{\prod_{j=1}^{Z+1}\sigma_j}}) ,
\end{equation}
with $\Phi$  as  in $\eqref{Phidef}$ $\big(${\it{cf.\ }}\eqref{Tfromsigma} and \eqref{BasicInt}-\eqref{gDef}$\big)$.  If need be, we will write $\widetilde{g}_{w,\M,Z}$ for $\widetilde{g}$.

We obtain the analogue for $\zeta_{N,n}$ of Proposition  \ref{MeromZ} by simply replacing  $\mathcal{Z}_{N,n}$  by  $\zeta_{N,n}$.
\begin{proposition}\label{MeromZeta}  If $\M$ and  $Z$ are as in Lemma $\mathrm{\ref{MeromorphyZ}}$, then  
 $(s,w)\to\zeta_{N,n}(s,w,\M)$  in \eqref{ZetaSerDef} has a meromorphic continuation  to 
$\C \times\{w\in\C^n|\,\re(w_k)>0,\ 1\le k \le n\}$, and 
 $s\to\zeta_{N,n}(s,w,\M)$ has poles of order at most $Z+1$.
Poles may occur only at rational  numbers   $\tilde{s}\le N/\big(n-Z\big)$,  $\tilde{s}=a/b$  for some $a,b\in\Z$ and $n-Z\le b\le n$.

Moreover, $\zeta_{N,n}(s,w,\M)$ is analytic  at $(-\ell,w)$ for all non-negative integers $\ell$ and all $w\in\C^n$ with $\re(w_k)>0 \, \ (1\le k \le n)$.
\end{proposition}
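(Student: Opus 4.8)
The plan is to obtain Proposition \ref{MeromZeta} by re-running the proofs of Lemma \ref{MeromorphyZ} and Proposition \ref{MeromZ}, with the integrand $H$ of \eqref{ZwithGamma} replaced by $\widetilde H=H\cdot\Phi$ of \eqref{ZetawithGamma}, hence the function $g$ of \eqref{gDef} replaced by $\widetilde g$ of \eqref{tildegDef}, and with $\widetilde I$ of \eqref{GammaTildeI} in place of $I$. As was pointed out at the close of \S\ref{Zmercont}, the whole machinery there uses only two facts about the function occurring in the inner integral \eqref{BasicInt} after the change of variables \eqref{NewVariables}: that it extends to a smooth function on a neighbourhood of $[0,\infty)\times[0,1]^{n-1}$, and that each of its partial derivatives is bounded on that set by a polynomial in $\sigma_1$, uniformly for $\sigma'\in[0,1]^{n-1}$. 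So the first and only substantive step is to check that $\widetilde g$ shares these two properties with $g$.

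By \eqref{tildegDef} one has $\widetilde g=g\cdot(\Phi\circ\Psi)$, where $\Psi(\sigma):=\big(\sigma_1,\ \sigma_1\sigma_2,\ \ldots,\ \textstyle\prod_{j=1}^{Z+1}\sigma_j,\ \sigma_{Z+2}\prod_{j=1}^{Z+1}\sigma_j,\ \ldots,\ \sigma_n\prod_{j=1}^{Z+1}\sigma_j\big)$ is the monomial map of \eqref{Tfromsigma}. The factor $g$ already has the two properties — this is precisely what powers the proof of Lemma \ref{MeromorphyZ}: the numerator of $g$ in \eqref{gDef} is a product of exponentials whose exponents have nonnegative real part on $[0,\infty)\times[0,1]^{n-1}$, hence is bounded there together with all of its derivatives, while its denominator $y(\sigma')$ from \eqref{yDef} is a product of factors $a_{ij}+\cdots$ each of which, for $i\in F_j$ and $\sigma'\in[0,1]^{n-1}$, has real part at least $\re(a_{ij})>0$ by Hypothesis $\Hh$; so $y$ is bounded away from $0$ and $1/y$ is smooth with bounded derivatives, uniformly in $\sigma'$. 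For the new factor, recall from the remark after \eqref{Phidef} that $\Phi$ extends to a smooth function on $(-\varepsilon,\infty)^n$ with $|\partial^\alpha\Phi(T)|\le H_\alpha(\|T\|)$ for a polynomial $H_\alpha$. On $[0,\infty)\times[0,1]^{n-1}$ every coordinate of $\Psi(\sigma)$ lies in $[0,\sigma_1]$ by \eqref{Tfromsigma}, so $\|\Psi(\sigma)\|\le\sqrt{n}\,\sigma_1$; hence $\Phi\circ\Psi$ is smooth on a neighbourhood of that domain, and the chain rule expresses each of its $\sigma$-partial derivatives as a finite sum of products of derivatives of $\Phi$ evaluated at $\Psi(\sigma)$ — each dominated by $H_\alpha(\sqrt n\,\sigma_1)$ — with polynomials in $\sigma$, hence as something bounded by a polynomial in $\sigma_1$, uniformly for $\sigma'\in[0,1]^{n-1}$. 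By the Leibniz rule the product $\widetilde g$ inherits both properties.

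With this in hand the remainder is formal. Applying $a^{-s}\Gamma(s)=\int_0^\infty t^{s-1}e^{-at}\,dt$ and summing the geometric series gives \eqref{ZetawithGamma}, and the $\Delta^\gamma$-decomposition together with the change of variables \eqref{NewVariables} gives \eqref{ZetaBasicInt}, which is exactly \eqref{BasicInt} with $g$ replaced by $\widetilde g$. One then integrates by parts $M+1$ times in each of $\sigma_n,\ldots,\sigma_2$ and performs the order-$M$ $\sigma_1$-MacLaurin expansion \eqref{TaylorExp} on the resulting partial derivatives of $\widetilde g$, word for word as in the proof of Lemma \ref{MeromorphyZ}; the polynomial-growth bound just established makes every integral in the analogue of \eqref{MeroCont} analytic for $\re(s)>(N-M)/n$, so that the analogue of the entire function in \eqref{IMeromorphy}--\eqref{IMeromorphybis} holds for $\widetilde I$. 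Feeding this into \eqref{GammaTildeI} in place of \eqref{GammaI}, the pole-counting argument in the proof of Proposition \ref{MeromZ} applies verbatim: $s\mapsto\zeta_{N,n}(s,w,\M)$ has poles only at rationals $\tilde s=a/b$ with $n-Z\le b\le n$ and $\tilde s\le N/(n-Z)$, of order at most $Z+1$; and at $s=-\ell$ for $\ell\in\N_0$ the order-$n$ zero of $\Gamma(s)^{-n}$ cancels the at-most-order-$n$ pole of $\widetilde I(s,w)$, so $\zeta_{N,n}(s,w,\M)$ is analytic at $(-\ell,w)$ whenever $\re(w_k)>0$ for $1\le k\le n$.

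The proof has essentially no hard part: the only place requiring genuine, though routine, care is the verification in the middle paragraph that post-composing $\Phi$ with the monomial change of variables and then differentiating arbitrarily often preserves the smoothness and the uniform-in-$\sigma'$ polynomial-in-$\sigma_1$ bounds. Once that bookkeeping is done, nothing in \S\ref{Zmercont} has to be re-derived.
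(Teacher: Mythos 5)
Your proof is correct and follows the paper's approach exactly: both observe that the entire machinery of \S\ref{Zmercont} depends only on the smoothness of $g$ and the polynomial-in-$\sigma_1$, uniform-in-$\sigma'$ bounds on its partial derivatives, and that $\widetilde g = g\cdot(\Phi\circ\Psi)$ inherits these from $g$ and from the stated properties of $\Phi$, after which the argument for $\mathcal{Z}_{N,n}$ is repeated verbatim. Your write-up is actually a bit more detailed than the paper's (which simply asserts ``these properties are shared by $\widetilde g$''), filling in the chain-rule/Leibniz bookkeeping and the observation $\|\Psi(\sigma)\|\le\sqrt n\,\sigma_1$; one small slip is that the exponents $-w_j\sigma_1\cdots\sigma_j$ in the numerator of $g$ have \emph{non-positive} (not non-negative) real part, which is what bounds the exponentials, and those exponentials together with their derivatives are only polynomially bounded in $\sigma_1$, not bounded outright, but the conclusion you draw from them is the correct one.
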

\begin{proof} 
As remarked at the end of the previous subsection, the proof of Lemma  
\ref{MeromorphyZ}   depended on \eqref{BasicInt}, but only used   the smoothness of $g$ and the polynomial boundedness of its partial derivatives.  
As these properties are shared by $\widetilde{g}$ in \eqref{tildegDef}, 
we see from \eqref{ZetaBasicInt} that Lemma 
\ref{MeromorphyZ} still holds if we replace $I$ by $\widetilde{I}$ everywhere. Proposition \ref{MeromZeta} then  follows on  replacing in the proof of  Proposition  
\ref{MeromZ}   every occurrence of $\mathcal{Z}_{N,n}$ by $\zeta_{N,n}$, every $I$ by $\widetilde{I}$ and every $g$ by $\widetilde{g}$.
\end{proof}
\section{ Values of $\zeta_{N,n} $ and  $\mathcal{Z}_{N,n} $ at $s=0,-1,-2,\ldots$}\label{Specval} In   \eqref{GammaTildeI}  we have expressed $\zeta_{N,n}(s,w,\M)$   
as  $\Gamma(s)^{-n}$ times a finite  sum of  
$n$-dimen\-sion\-al Mellin transforms $\widetilde{I}(s,w)$ of   elementary expressions. As $\Gamma(s)^{-n}$ vanishes to order $n$ at non-positive integers 
 $s=-\ell$,   only   the polar part  of   $\widetilde{I}(s,w)$ blowing up at $s=-\ell$  to order  $n$ 
 contributes  to $\zeta_{N,n}(-\ell,w,\M)$. 
We will show in Theorem \ref{AppliedFiniteTaylor} below   that this leads to  a   formula for $\zeta_{N,n}(-\ell,w,\M)$ in terms of a finite Taylor expansion at the origin of an explicit elementary function. This is a 
 widely used  method in dimension 1 \cite[Lemma 4.3.6]{BH},  applied in higher dimensions  
by Cassou-Nogu$\grave{\text{e}}$s and then Colmez to deal with Shintani's zeta function \cite[Prop.\ 7]{CN} \cite[Lemma 3.3]{Col}.

We will need some notation. 
Define integers $\alpha_j$ and functionals  $D^{(q)}$ as
\begin{align} \label{alphajDef} 
&\alpha_j =\alpha_j(\ell,\M,Z) := 
\begin{cases}(n+1-j)\ell 
+ \sum_{k=j}^{Z+1}|F_k(\M)| &\  \text{if }     2\le j\le Z+1 , \\ 
\ell &\  \text{if }     Z+2 \le j\le n,
\end{cases}
\end{align}
\begin{align}
   & \label{DrDef}
D^{(q)}(h)=D_{\ell,\M,Z}^{(q)}(h):=\frac{1}{q!\,\alpha_2!\,\alpha_3!\,\cdots\,\alpha_n!}\cdot\frac{\partial^{q+ \sum_{j=2}^n\alpha_{j}} h}{\partial\sigma_1^q\,\partial\sigma_{2}^{\alpha_{2}}\,\,\partial\sigma_{3}^{\alpha_{3}}\cdots\,\partial\sigma_{n}^{\alpha_{n}} }\bigg|_{\sigma=0} , 
\end{align}
where   $h=h(\sigma)=h(\sigma_1,\ldots,\sigma_n)$ and the set  $F_k(\M)\subset\{1,\ldots,N \}$ is given by \eqref{DefFjM}.   
\begin{theorem}\label{AppliedFiniteTaylor}
Suppose $\M=(a_{ij})$ satisfies Hypothesis $\Hh$ in 
  \eqref{HypH}, $w=(w_1,\ldots,w_n)=(w_1,w')\in\C^n$ satisfies 
 $\re(w_j)>0\ \,(1\le j\le n)$,  $\ell$ is a non-negative integer,   and 
 $Z<n$ is a non-negative  integer such that 
no row of $\M$ has more than $Z $ vanishing entries.  
Then the  value $\zeta_{N,n}(-\ell,w,\M)$ of the analytic continuation 
 of the Dirichlet series defined in \eqref{ZetaSerDef} is 
\begin{equation}\label{ZetaSeratell}
 \zeta_{N,n}(-\ell,w,\M) 
 =\frac{(-1)^{N}(\ell!)^n }{\displaystyle{\prod_{j=0}^Z }(n-j)}  \sum_{\gamma\in \InZ}\! \!  \sum_{q=0}^{n\ell+N}\frac{ (-1)^q (w^\gamma_1)^{n\ell+N-q} }{(n\ell+N-q)!}D_{\ell,\M^\gamma,Z}^{(q)}(\widetilde{g}_{w^{\prime^\gamma},\M^\gamma,Z}),
\end{equation}
where 
$w^\gamma_1:=w_{\gamma(1)}, \ \big(  w^{\prime^\gamma}  \big)_j:=w_{\gamma(j)}\ \, (2\le j\le n)$,  $\M^\gamma:=(a_{i\gamma(j)})_{\substack{1\le i\le N\\ 1\le j\le n}}$, $D^{(q)}$ is given by \eqref{DrDef}, $\widetilde{g}_{w',\M,Z}$ 
 by \eqref{tildegDef}, and $\InZ$ is the finite set defined two lines after  \eqref{ZwithGamma}.

Similarly, letting $\mathcal{Z}_{N,n}(s,w,\M)$ be as in  \eqref{ZetaIntDef} and $g_{w,\M,Z}$ as in \eqref{gDef}, we have
\begin{equation}\label{ZetaIntatell}
\mathcal{Z}_{N,n}(-\ell,w,\M)
=\frac{(-1)^N (\ell!)^n }{\displaystyle{\prod_{j=0}^Z }(n-j)} \!    \sum_{\gamma\in \InZ}   \!  \! \sum_{q=0}^{n\ell+N}\! \frac{(-1)^q(w^\gamma_1)^{n\ell+N-q} }{(n\ell+N-q)!}D_{\ell,\M^\gamma,Z}^{(q)}(g_{w^{\prime^\gamma},\M^\gamma,Z}).
\end{equation}
\end{theorem}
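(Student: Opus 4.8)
The plan is to prove \eqref{ZetaSeratell} first and then observe that \eqref{ZetaIntatell} follows by the identical argument with $\widetilde{H}$ replaced by $H$ and $\widetilde g$ by $g$ (equivalently, $\Phi\equiv 1$). By \eqref{GammaTildeI} we have $\zeta_{N,n}(s,w,\M)=\Gamma(s)^{-n}\sum_{\gamma\in\InZ}\widetilde I_{\M^\gamma,Z}(s,w^\gamma)$, and since $\Gamma(s)^{-n}$ vanishes to order exactly $n$ at $s=-\ell$, it suffices to extract the coefficient of $(s+\ell)^{-n}$ in the Laurent expansion of $\widetilde I_{\M,Z}(s,w)$ at $s=-\ell$ for a single $\M$ (satisfying $\Hh$) and then multiply by $\lim_{s\to-\ell}(s+\ell)^n\Gamma(s)^{-n}=\big((-1)^\ell\ell!\big)^n$. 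So the first step is to pin down that local Laurent coefficient.

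Next I would return to the meromorphic continuation formula \eqref{MeroCont} (which holds with $g$ replaced by $\widetilde g$, as noted in the proof of Proposition \ref{MeromZeta}). Only the first line of \eqref{MeroCont} — the sum over $\ell'$ from $0$ to $M$ of the terms involving $\Gamma(ns-N+\ell')\,w_1^{-(ns-N+\ell')}$ — can contribute a pole at $s=-\ell$; the remainder integral on the second line is analytic there once $M\ge N$. Choosing $M$ large enough that $ns-N+M>0$ at $s=-\ell$ (say $M\ge n\ell+N$), the factor $\Gamma(ns-N+\ell')$ is regular at $s=-\ell$ unless $-n\ell-N+\ell'\le 0$, i.e. unless $\ell'\le n\ell+N$; moreover $\Gamma$ contributes at most a simple pole there. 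The genuinely singular behaviour of order $n$ must therefore come from the prefactor $T_M(s)=\prod_{p=0}^M\prod_{j=2}^n (s_j+p)^{-1}$: using \eqref{sjDef} one checks that at $s=-\ell$ exactly $n-1$ of the linear factors $s_j+p$ vanish to first order (one for each $j$ with $2\le j\le n$, at the $p=\alpha_j$ given by \eqref{alphajDef}), and combined with the at-most-simple pole of the $\Gamma$-factor this produces the pole of order $n$. I would compute the residue-type limit $\lim_{s\to -\ell}(s+\ell)^n\widetilde I(s,w)$ by multiplying out these $n-1$ vanishing linear factors of $T_M(s)$ against the $\Gamma$-factor, and carefully collecting the resulting constants: $1/\prod_{j=0}^Z(n-j)$ emerges from the leading coefficients $(n+1-j)$ of the $s_j$ for $2\le j\le Z+1$ (the factors with $Z+2\le j\le n$ have leading coefficient $1$), the $(-1)^N$ from $\mathrm{Res}_{s=-\ell}\Gamma(ns-N+\ell')$ type constants, and the $1/\alpha_j!$ from the derivatives below.

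The last ingredient is to identify which terms of the $\ell'$-sum and of $g_*$ survive. Writing $g_*(s,\sigma)=\sum_u c_u(s)f_u(\sigma)$ as in \eqref{IntegratedByPartsI}, the $\sigma'$-integral $\int_{(0,1)^{n-1}}\big(\partial^{\ell'}f_u/\partial\sigma_1^{\ell'}\big)(0,\sigma')\prod_{j=2}^n\sigma_j^{s_j+M}\,d\sigma'$ at $s=-\ell$ picks out, via the pole structure, exactly the Taylor coefficient in the $\sigma_j$-variables of order $\alpha_j$ — this is the standard one-dimensional fact that $\int_0^1 \sigma^{s_j+M}h(\sigma)\,d\sigma$ has residue at $s=-\ell$ equal to a Taylor coefficient of $h$, applied in each of the $n-1$ directions $\sigma_2,\dots,\sigma_n$. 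Undoing all the integrations by parts that produced $g_*$ from $\widetilde g$, these mixed Taylor coefficients reorganize into the single evaluation $D^{(q)}_{\ell,\M,Z}(\widetilde g)$ with $q=\ell'$, and the sum over $\ell'$ becomes the sum over $q$ from $0$ to $n\ell+N$; the factor $(-1)^q(w_1)^{n\ell+N-q}/(n\ell+N-q)!$ comes from evaluating $\Gamma(ns-N+q)/w_1^{ns-N+q}$ and its residue at $s=-\ell$, since $\mathrm{Res}_{s=-\ell}\Gamma(ns-N+q)=(-1)^{n\ell+N-q}/\big(n\,(n\ell+N-q)!\big)$ and $w_1^{-(ns-N+q)}\to w_1^{n\ell+N-q}$. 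I expect the main obstacle to be bookkeeping: tracking the interplay between the $n-1$ simple zeros of $T_M(s)$, the simple pole of the $\Gamma$-factor, and the many integration-by-parts terms in $g_*$, and verifying that after unwinding everything the derivatives really collapse to the clean operator $D^{(q)}$ in \eqref{DrDef} with precisely the exponents $\alpha_j$ of \eqref{alphajDef} and the stated rational prefactor. Once \eqref{ZetaSeratell} is established, \eqref{ZetaIntatell} is immediate since \S\ref{Zmercont} gives the same formulas with $\Phi\equiv 1$, whence $\widetilde g$ becomes $g$.
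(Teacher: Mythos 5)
Your reduction in the first paragraph (divide by $\Gamma(s)^{-n}$, reduce to the top Laurent coefficient of $\widetilde I_{\M,Z}$ at $s=-\ell$, handle each $\gamma\in\InZ$ separately) matches the paper exactly, and your identification of the sources of the various constants — $(n+1-j)$ from the linear coefficients of $s_j$ in \eqref{sjDef}, the residue of $\Gamma(ns-N+q)$, the $\alpha_j!$ from Taylor coefficients — is correct. The substantive difference is in how you extract the Laurent coefficient. You propose to \emph{reuse} the integration-by-parts continuation formula \eqref{MeroCont} and read off the order-$n$ pole from the product $T_M(s)\cdot\Gamma(ns-N+q)$. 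The paper does something genuinely different: it discards \eqref{MeroCont} and returns to the untouched integral \eqref{BasicInt}, inserting a \emph{fresh multi-variable} Taylor expansion \eqref{ManyVariableTaylorExp} of $g$ in all of $\sigma_1,\dots,\sigma_n$ to a sufficiently high order $k$, with the remainder kept in integral form. The finite Taylor part then produces the $n$th-order pole \emph{term by term} from the explicitly computed iterated integrals, so the operator $D^{(q)}$ of \eqref{DrDef} appears immediately, with no need to ``undo'' any integration by parts; the hard work is transferred to showing that each remainder contribution $F_A(s)/\Gamma(s)^n\to 0$, which the paper does by a clever sign argument on $\sum_j\re(s_j+A_j)$.

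There is a genuine gap in your step: the assertion that the $\sigma'$-integral in \eqref{MeroCont} ``picks out, via the pole structure, exactly the Taylor coefficient in the $\sigma_j$-variables of order $\alpha_j$'' is not correct as stated. At $s=-\ell$ the exponents $s_j+M$ are strictly positive, so that $\sigma'$-integral is \emph{regular} there and its value is a (not obviously enlightening) definite integral of $\partial^{\ell'}f_u/\partial\sigma_1^{\ell'}$ against $\prod_j\sigma_j^{-\alpha_j+M}$ — not a Taylor coefficient. The poles live entirely in the prefactor $T_M(s)$. Recovering $D^{(q)}(\widetilde g)$ from the pair (regular integral of $g_*$, singular prefactor $T_M(s)$) amounts to ``undoing the integrations by parts,'' and you flag this as bookkeeping, but it is the substantive core of the theorem, not a routine reshuffle: $g_*$ is a sum of many boundary terms and mixed derivatives of $\widetilde g$ with $s$-dependent coefficients, and the cleanest rigorous way to reassemble them is to note that $T_M(s)\int_{\sigma'}g_*\prod\sigma_j^{s_j+M}\,d\sigma'$ is the analytic continuation of the Mellin transform $\int_{\sigma'} \widetilde g\prod\sigma_j^{s_j-1}\,d\sigma'$ and then compute \emph{that} function's Laurent expansion — which in turn requires a Taylor expansion of $\widetilde g$ in $\sigma'$. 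In other words, filling your gap leads right back to the paper's direct multi-variable-Taylor computation. (Two smaller imprecisions: you say the second-line remainder of \eqref{MeroCont} is analytic ``once $M\ge N$,'' but at $s=-\ell$ one needs $M>n\ell+N$, as you yourself state earlier; and the claim that ``only the first line can contribute a pole at $s=-\ell$'' should be ``only the first line can contribute a pole of order $n$,'' since the second line times $T_M(s)$ still has a pole of order up to $n-1$, just one that is killed by $\Gamma(s)^{-n}$.)
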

\noindent A glance at \eqref{gDef}, \eqref{tildegDef},  \eqref{ZetaSeratell} and \eqref{ZetaIntatell}  shows that $\zeta_{N,n}(-\ell,w,\M)$ and 
$ \mathcal{Z}_{N,n}(-\ell,w,\M)$ lie in $\Q(\{a_{ij}\})[w]$, \ie they are   polynomial functions   of $w_1,\ldots,w_n$ having coefficients in the subfield $\Q(\{a_{ij}\})\subset\C$ generated by the coefficients  of $\M=(a_{ij})$.
 
Before proving Theorem \ref{AppliedFiniteTaylor}, we simplify 
 \eqref{ZetaIntatell} by  computing its  inner sum 
over $q$.
\begin{corollary}\label{CorToThm5} With notation and assumptions as in Theorem \ref{AppliedFiniteTaylor}, define  the functional 
\begin{equation}\label{NewD}
 \mathcal{D} (H) := \frac{(-1)^N (\ell!)^n }{ (n\ell+N)!  \,
 \big(\prod_{j=2}^n\alpha_j!\big) \big(\prod_{j=0}^Z  (n-j)\big)}
 \frac{\partial^{\alpha_2 +\cdots+\alpha_n}  H }{\partial\sigma_{2}^{\alpha_{2}}  \cdots \partial\sigma_n^{\alpha_n}  } 
\bigg|_{\sigma' =0}  .
\end{equation}
 Then  
\begin{equation}\label{SimpleVersionTh5}
\mathcal{Z}_{N,n}(-\ell,w,\M)
=    \sum_{\gamma\in \InZ}   \!  \!  \mathcal{D}\bigg( \frac{\big(w_1^{\!\gamma } +
h_{w^{\prime^\gamma}\!,Z}(\sigma')\big)^{n\ell+N}}{y_{\M^\gamma,Z}(\sigma')} \bigg),
\end{equation}
where  $\sigma':=(\sigma_2,\ldots,\sigma_n)$, $y_{\M^\gamma,Z}$ is defined in \eqref{yDef}, and 
\begin{equation}
 h_{w'\!,Z}(\sigma' ):=\sum_{j=2}^{Z+1} 
  w_j\prod_{k=2}^j\sigma_k  \  \ + \ \
\Big(\prod_{k=2}^{Z+1}\sigma_k \Big)\cdot \sum_{j=Z+2}^n
 w_j\sigma_j.
\end{equation}
\end{corollary}
\begin{proof}
 Theorem \ref{AppliedFiniteTaylor} shows that 
$\mathcal{Z}_{N,n}(-\ell,w,\M)$   equals
\begin{equation}\label{SimplifyingTh5}
   \sum_{\gamma\in \InZ}   \!  \! 
 \mathcal{D}\bigg( \sum_{q=0}^{n\ell+N}  \binom{n\ell+N}{q}(-1)^q(w_1^{\!\gamma })^{n\ell+N-q} 
 \frac{\partial^q} {\partial\sigma_1^q}
\Big(\frac{ e^{-\sigma_1 h_{w^{\prime^\gamma},Z}(\sigma' )}}{y_{\M^\gamma,Z}}  
 \Big)\Big|_{\sigma=(0,\sigma' )}\bigg),
\end{equation}
where $\binom{n\ell+N}{q}$ is a binomial coefficient and  $\sigma=(\sigma_1,\sigma')\in\R^n$. 
Now, 
\begin{align*}
\frac{\partial^q}{\partial\sigma_1^q}\Big(\frac{ e^{-\sigma_1 h(\sigma' )}}{y_{\M^\gamma,Z}}   \Big)\Big|_{\sigma=(0,\sigma' )} = \frac{(-1)^q (h(\sigma' ))^q\,
 e^{-\sigma_1  h(\sigma' )}}{y_{\M^\gamma,Z}}
 \Big|_{\sigma=(0,\sigma' )} =  \frac{(-1)^q \big(h (\sigma' )\big)^q}{y_{\M^\gamma,Z}},
\end{align*}
where $h=h_{w^{\prime^\gamma},Z}$. On substituting this in \eqref{SimplifyingTh5}, the binomial theorem  yields \eqref{SimpleVersionTh5}.  
\end{proof}
\begin{proof}[Proof of Theorem 
 $\mathrm{\ref{AppliedFiniteTaylor}}$.]
As the proofs for $\mathcal{Z}_{N,n} $ and $\zeta_{N,n} $ will  be similar, we give  first  the proof for the  simpler case of $\mathcal{Z}_{N,n}$, and then point out the  changes needed for  $\zeta_{N,n}$.   
Let 
\begin{align}\label{RwDef}
R_\ell (w)=R_{\ell,\M,Z}(w):=\frac{(-1)^N (\ell!)^n }{n(n-1)\cdots (n-Z)}\sum_{q=0}^{n\ell+N} \frac{(-1)^q w_1^{n\ell+N-q} }{(n\ell+N-q)!}D ^{(q)}(g ),
\end{align}
so that on the right-hand side of \eqref{ZetaIntatell} we find $\sum_\gamma R_{\ell,\M^\gamma,Z}(w^\gamma)$.
From  \eqref{GammaI}, 
$$
\mathcal{Z}_{N,n}(s,w,\M)=\frac1{\Gamma(s)^n}\sum_{\gamma\in\InZ} \  I_{ \M^\gamma,Z}(s,w^\gamma ) ,
$$
and from Proposition  
\ref{MeromZ}   we know that   $\mathcal{Z}_{N,n}$  is regular at $s=-\ell$.   Hence   to complete the proof of \eqref{ZetaIntatell} it suffices to 
show  
\begin{align}\label{IsFormula}
\lim_{s\to-\ell}\frac{I_{ \M,Z} (s,w)}{\Gamma(s)^n}=R_{\ell,\M,Z}(w).
\end{align}
 Letting   $\partial^A g  :=\frac{\partial^{|A|} g}{\partial \sigma_1^{A_1}\cdots\,\partial \sigma_n^{A_n}} $,  we can write the   multi-variable Taylor expansion about the origin (with remainder in integral form) of $g$ to order $k$ \cite[pp.\ 12--13]{Hor} as  
\begin{align}\label{ManyVariableTaylorExp}
&g( \sigma)=\sum_{\substack{{A}\in\N_0^n  \\ |{A}|\le k}}\frac{\sigma^{A}}{{A}!}  \partial^A g(0)+(k+1)\!\!\!\!\sum_{\substack{{A}\in\N_0^n  \\ |{A}|= k+1}}\frac{{\sigma}^A}{A!}
\int_{y=0}^{1}(1-y)^k \partial^A g(y\sigma)\,dy,
\\ \nonumber
{A}&:= (A_1,\ldots,A_n),\quad |A|:=\sum_{j=1}^n A_j, \quad
  \sigma^{A}:= \prod_{j=1}^n \sigma_j^{A_j}  ,\quad A! :=\prod_{j=1}^n (A_j!). 
\end{align} 
This finite Taylor expansion holds for any  smooth complex-valued  
function   on an open convex subset of $\R^n$ containing $0$ and $\sigma$. 

Substituting \eqref{ManyVariableTaylorExp}  into \eqref{BasicInt},  using $s_1 =ns-N$ from \eqref{sjDef}, we find  for $\re(s)\gg0$,
\begin{align}\nonumber 
I(s)&=\sum_{\substack{{A}\in\N_0^n  \\ |{A}|\le k}}
\frac{ \partial^A g ( 0 )}{A!}  \Big(\int_{\sigma_1=0}^\infty e^{-w_1\sigma_1}\sigma_1^{A_1+ns-N-1}\,d\sigma_1\Big)  \prod_{j=2}^n\int_{\sigma_j=0}^1\sigma_j^{s_j+A_j-1}\,d\sigma_j  
\\ \nonumber 
&\  \  + \! \sum_{\substack{{A}\in\N_0^n  \\ |{A}|= k+1}}\!\!\!\frac{k+1}{A!}\int_{\sigma_1=0}^{\infty}  e^{-w_1\sigma_1} \int_{\sigma'} \prod_{j=1}^{n}\sigma_{j}^{s_{j}+A_{j}-1}\int_{y=0}^{1}\!\!(1-y)^{k}\partial^A g(y\sigma)\,dy\,\,d\sigma'\!\,d\sigma_1\\
 \label{gTaylorExpanded}
&=\Big(\prod_{j=2}^{n} \frac{1}{s_j+A_j}\Big) 
\sum_{\substack{{A}\in\N_0^n  \\ |{A}|\le k}}\frac{ \partial^A g}{A!}(0) \frac{\Gamma(ns-N+A_1)}{w_1^{ns-N+A_1}}\ +\sum_{\substack{{A}\in\N_0^n  \\ |{A}|= k+1}}  \frac{k+1}{ A!} F_A( s),
\end{align}
where the (obvious) meaning of $F_A( s)$ is spelled out in \eqref{FADef} below.

To prove \eqref{IsFormula} we will need to compute some limits. Let
 $u :=n\ell+N-A_1$, so 
\begin{align}\nonumber
 \frac{\Gamma(ns-N+A_1)}{\Gamma(s)}=  \frac{\big[\Gamma(ns-N+A_1)(ns-N+A_1+u)\big]}{\big[(s+\ell)\Gamma(s)\big]} \, \Big[\frac{s+\ell}{ns-N+A_1+u}\Big].
\end{align}
Each of the three terms within brackets  above has a limit as $s\to-\ell$.  Indeed, an easy induction shows that for $m\in\N_0$ the residue of $\Gamma(s)$ at the (simple) pole $-m$ is $(-1)^m/m!\,$. Thus, 
$$
\lim_{s\to-\ell}(s+\ell)\Gamma(s)=\frac{(-1)^\ell}{\ell!},\qquad \qquad 
\lim_{s\to-\ell}\frac{s+\ell}{ns-N+A_1+u}=\frac1n.
$$
Letting $z:=ns-N+A_1 $ and recalling   $u :=n\ell+N-A_1 $,  yields 
\[\lim_{s\to-\ell}\Gamma(ns-N+A_1)(ns-N+A_1+u)=\lim_{z\to-u}\Gamma(z)(z+u)=\begin{cases}
0 &\text{if }A_1>n\ell+N,\\
\frac{(-1)^{u}}{u!}  &\text{if }A_1\le n\ell+N.
\end{cases}\]
Hence, 
\begin{equation}\label{GammansOverGamma}
\lim_{s\to-\ell}\frac{\Gamma(ns-N+A_1)}{\Gamma(s)}=
\begin{cases}
0\ &\text{if }A_1>n\ell+N,\\
\frac{\ell! \,(-1)^{(n+1)\ell+N-A_1}}{n(n\ell+N-A_1)!}\ &\text{if }A_1\le n\ell+N.
\end{cases}
\end{equation}
Next we compute another limit. From \eqref{alphajDef} and  \eqref{sjDef}  we obtain
\begin{align} \nonumber
\lim_{s\to-\ell} \frac{1}{\Gamma(s)(s_j+A_j)}&= \lim_{s\to-\ell} \frac{1}{\Gamma(s)(s+\ell)}\cdot\frac{( s+\ell)}{(s_j+A_j)}\\  
&=
\begin{cases}
\frac{\ell! \,(-1)^{ \ell }}{ n+1 -j}\ &\text{if }A_j =\alpha_j\text{ and } \ 2\le j\le Z+1, \\  \ell!(-1)^{\ell}\ &\text{if } A_j=\alpha_j\text{ and } Z+  2\le j\le n, \\
0\  &\text{otherwise}.
\end{cases} \label{OneOverGammasjAj}
\end{align}
 
We prove next that for $k:=n\big((n+1)\ell+N+1\big)$  we have $\big(${\it{cf.}}\ \eqref{gTaylorExpanded} and \eqref{RwDef}$\big)$
\begin{align}\nonumber
&\lim_{s\to-\ell}
\sum_{\substack{{A}\in\N_0^n  \\ |{A}|\le k}}\frac{ \partial^A g ( 0 )}{A!}  \, \frac{\Gamma(ns-N+A_1)}{w_1^{ns-N+A_1} \prod_{j=2}^n (s_j+A_j)}\cdot \Gamma(s)^{-n}  \\
& \quad=
\lim_{s\to-\ell}  \sum_{\substack{{A}\in\N_0^n  \\ |{A}|\le k}}\frac{ \partial^A g ( 0 )}{A!}  \Big(\prod_{j=2}^n \big(\Gamma(s)(s_j+A_j)\Big)^{\!-1}\cdot  \frac{\Gamma(ns-N+A_1) }{\Gamma(s)\ \, w_1^{ns-N+A_1}}  =R_\ell(w).\label{GammaTimesTaylor} 
\end{align} 
Indeed,  \eqref{GammansOverGamma} and  \eqref{OneOverGammasjAj}   imply that none of the $A=(A_1,\ldots,A_n)$ on the left-hand side of  \eqref{GammaTimesTaylor} 
 contribute to this limit unless $0\le A_1\le n\ell+N$ and $A_j =\alpha_j\ \,(2\le j\le n)$.
Each of these contributing indices $A$ appears in the expansion
as we have chosen $k$  large enough. Namely, 
\begin{align*} 
|(A_1,\alpha_2,\ldots,\alpha_n)|&\le n\ell+N+\sum_{j=2}^{Z+1}\Big((n+1-j)\ell+ \sum_{k=j}^{Z+1} |F_k(\M)|\Big)+(n-Z-1)\ell
\\    &\le n\ell+N+\ \sum_{j=2}^{Z+1}(n\ell+N)\ +\ n\ell\leq n(n\ell+N)+n\ell<k,
\end{align*}
where we used $Z<n$ and \eqref{UnionFjM}.
Using \eqref{GammansOverGamma} and \eqref{OneOverGammasjAj}   we find that  $A=(A_1, \alpha_2,\ldots,\alpha_n)$ appears in \eqref{GammaTimesTaylor}, contributing the term corresponding to $q=A_1$ in the sum defining  $R_\ell(w)$ in \eqref{RwDef}.

To complete the proof of  \eqref{IsFormula} we  will  show that the meromorphic continuation to  $\C$ of  each $F_A( s)$ with $|A|=k+1$, has a pole  at $s=-\ell$ of order at most $n-1$. Indeed, for $\re(s)\gg0$ by definition, 
\begin{align}\label{FADef}
&F_A(s):= \int_{\sigma_1=0}^{\infty}  e^{-w_1\sigma_1}\int_{\sigma'} \prod_{j=1}^{n}\sigma_{j}^{s_{j}+A_{j}-1} \int_{y=0}^{1}(1-y)^{k}\partial^A g(y\sigma)\,dy d\sigma' d\sigma_1\\
&=\int_{\sigma_1=0}^\infty\!\! e^{-w_1\sigma_1}
\int_{\sigma'}\!G_A(\sigma )\prod_{j=1}^n\sigma_{j}^{s_j+A_j-1}\,
  d\sigma' d\sigma_1 \nonumber \quad\Big(G_A(\sigma ):=\int_{y=0}^{1}(1-y)^{k}\partial^A g(y\sigma)\,dy\Big).
\end{align}
 Note that  $G_A(\sigma )=G_A(\sigma_1,\sigma' )$  
 is $C^\infty$ for $(\sigma_{1},\sigma')\in(-\varepsilon,\infty)\times(-\varepsilon,1+\varepsilon)^{n-1} $ for some $\varepsilon>0$, and is bounded above by a polynomial in $\sigma_1$, independently  of $\sigma' \in[0,1]^{n-1}$. 

We can now carry out the analytic continuation of $F_A( s)$ to   the right half-plane $\re(s)>-\ell-\frac{1}{n}$ by repeated integration by parts, just as in the proof of Lemma 
 \ref{MeromorphyZ}.
This time, however, we have the advantage that   $\re(s_j+A_j)>\frac{1}{n}>0$ for a least one $j$ in the range $1\le j\le n$, as we will now show.
Indeed, 

\begin{align*}
\sum_{j=1}^n\,\re (&s_j+A_j )>|A|  +   \sum_{j=1}^n\Big( (-\ell-\tfrac1n)(n+1-j)-\sum_{k=j}^{Z+1} |F_k(\M)|\Big)
\\ &\ge |A|  +   \sum_{j=1}^n\big( (-\ell-\tfrac1n)(n+1-j)-N\big)\ge 
 |A|   +\sum_{j=1}^n\big( (-\ell-\tfrac1n)n-N\big)
\\ 
&=|A|-n  (n\ell+1+N)=k+1-n  (n\ell+1+N)= 1+n\ell \ge1.
\end{align*}

If $\re(s_{j_0}+A_{j_0})>0 $ for some $j_0\ge2$, then to effect the meromorphic continuation of $F_A(s) $ in \eqref{FADef} to the half-plane $\re(s)>-\ell-\frac{1}{n}$ just as we did for $I(s)$ in \S2, we need not carry out any  integration by parts with respect to $\sigma_{j_0}$.
Thus,
$T_M(s)=\prod_{j=2}^n\,\prod_{p=0}^M\frac{1}{s_j+p}$
in \eqref{Icontinued} is replaced by
${\prod_{j=2}^{'n}} \prod_{p=0}^M\frac{1}{s_j+p},$ 
where the  product over $j$ omits $j=j_0$.
This implies that $F_A(s)$ has  poles of order at most $n-1$ at $s=-\ell$.
Thus $F_A(s)/\Gamma(s)^n$ vanishes as $s\to-\ell$ if $2\le j_0\le n$. 

If $j_0=1$, \ie if $\re(s_1+A_1)>\tfrac1n$, we   go through with  the integration by parts with respect to the $n-1$ variables $\sigma_2,\ldots,\sigma_n$, accruing a pole at $s=-\ell$ of order at most $n-1$. 
However, in this case the factor  $e^{-w_1\sigma_1}\sigma_1^{s_1+A_1-1}$ in 
 \eqref{FADef} is integrable over $(0,\infty)$ as $\re(s_1+A_1)>0$.  
This implies that the  integration over $\sigma_1$ contributes no additional pole at $s=-\ell$, showing again that  $F_A(s)/\Gamma(s)^n$ vanishes as $s\to-\ell$. This concludes the proof of Theorem  
\ref{AppliedFiniteTaylor} for $\mathcal{Z}_{N,n}$.

The   above proof   applies  verbatim to $\zeta_{N,n}$  on replacing   $g$ by  $\widetilde{g}$ and $I(s,w)$ by $\widetilde{I}(s,w)$,   
just  as   the proof of  Proposition  
\ref{MeromZeta} followed from that of Proposition  \ref{MeromZ}.
\end{proof}

\section{Relations between zeta series and integrals}\label{Relations} Despite the parallel proofs exhibited so 
 far, $\zeta_{N,n}$ and $\mathcal{Z}_{N,n}$ do differ in some respects. For example, the homogeneity property in $w$  of 
 $\mathcal{Z}_{N,n}$, namely
\begin{equation}\label{Homog}
\mathcal{Z}_{N,n}(s,\lambda w,\M) =\lambda^{N-ns}
\mathcal{Z}_{N,n}(s,  w,\M)\qquad\qquad\qquad\big( \lambda >0\big),
\end{equation}
does  not hold for  $\zeta_{N,n}$.
 To prove \eqref{Homog} for $\re(s)\gg0$, simply change  
 variables from $t$ in the integral \eqref{ZetaIntDef} defining $\mathcal{Z}_{N,n}$ to  $t':=\lambda^{-1} t $.  For     $s\in\C$  
outside the possible singularities   $\tilde{s}$  in Proposition 
 \ref{MeromZ},    \eqref{Homog} then follows by analytic continuation.  
 
On the other hand, the $N$  difference equations in $w$   satisfied by $\zeta_{N,n}$, namely 
\begin{equation}\label{DifEq}
\zeta_{N,n}(s,w+\M_i,\M)-\zeta_{N,n}(s,w,\M)=-\zeta_{N-1,n}(s,w,\M^{\widehat{i\,}})\qquad (1\le i\le N ), 
\end{equation}
fail for  $\mathcal{Z}_{N,n}$. In \eqref{DifEq},  $\M_i\in\C^n$ is the $i^{\text{th}}$-row of $\M$ and  $\M^{\widehat{i\,}}$ is the $(N-1)\times n$ matrix that results 
 after removing   $\M_i$ from $\M$.     When $N=1$, \eqref{DifEq} holds if we define 
\begin{equation}\label{NEq0case}
\zeta_{0,n}(s, w) :=\prod_{j=1}^{n}  (w_{j}^{-s}).
\end{equation}
  For $\re(s)\gg0$, \eqref{DifEq} is proved by   canceling 
  the terms with   $k_i\ge1$  in the sums 
 \eqref{ZetaSerDef} defining the left-hand side. Analytic continuation again implies  
\eqref{DifEq}  for all $s$ outside the polar set  in Proposition 
 \ref{MeromZeta}. 

The relation between zeta integrals  $\mathcal{Z}_{N,n}$ and Dirichlet  series 
$\zeta_{N,n}$ becomes much clearer when we restrict 
$w\in\C^n$ to a subspace, namely to the row-space of  $\M$ (regarding $w\in\C^n$ as a $1\times n$ matrix). 
To parametrize the row-space, define a linear function 
\begin{align}\label{DefW}
W=W_\M:\C^N\to\C^n,\qquad
 \big(W_\M(x_1,\ldots,x_N)\big)_j:=\sum_{i=1}^N x_i a_{ij}\quad(1\le j\le n).
\end{align}
Thus, $  W(x)=\sum_{i=1}^N x_i\M_i=x\M$ (multiplication of the $1
\times N$ matrix $x$ by the $N
\times n$ matrix $\M)$. 

To insure   $\re\big(W(x)_j\big)\ge0$, we  will assume that $x_i $ lies in the open angular sector  $|\arg(x_i)|<A_\M\ \,( 1\le i\le N )$,  where  
\begin{equation}\label{AMDef}
A_\M:=\frac\pi2-\max_{ a_{ij} \not=0} \{|\arg(a_{ij})| \}.
\end{equation}
Note that our standing hypothesis $\Hh$ in \eqref{HypH} 
on $\M=(a_{ij})$  yields $A_\M>0$.  

If we wish ensure the strict inequality $\re\big(W(x)_j\big)>0$, we need to also assume  that $\M$ has no column of zeroes.  This holds for $\M=\M_\g$ in \eqref{LieToShinBarn}  since   any positive root $\alpha\in\Phi^+$ in  \eqref{WittDef} is   of the form $\alpha=\sum_{i=1}^r d_i\alpha_i$, where the $\alpha_i$ are the simple roots, $d_i\in \N_0$ for all $i$  and some $d_{i_0}\in\N$. Thus  $(\lambda_{i_0} ,\alpha^{\!\lor})>0$.

Note that using  \eqref{NEq0case} 
  definitions \eqref{ZetaSerDef} and \eqref{ZetaIntDef} of $\zeta_{N,n}$ and $\mathcal{Z}_{N,n}$ can be re-written  
\begin{align}\label{AltDefZeta}
\zeta_{N,n}\big(s,w,\M \big)&=\sum_{k\in\N_0^N}\!\! \zeta_{0,n}\big(s, w+W_\M(k )\big) ,
\\ \label{AltDefZ}
\mathcal{Z}_{N,n}\big(s,w,\M \big) &=\int_{t\in(0,\infty)^N}\!\! \zeta_{0,n}\big(s,w+ W_\M(t )\big)dt.
\end{align}
We  now relate $\mathcal{Z}_{N,n}(s,w,\M)$ to $\zeta_{N,n}(s,w,\M)$.
\begin{proposition}\label{Raabe}  If   Hypothesis $\Hh$ in 
 \eqref{HypH}  holds for $\M$, if $\re(w_j)>0\ \,(1\le j\le n)$ 
and  if $s$ is not one of the possible poles 
 $\tilde{s}$ in Propositions  $\mathrm{\ref{MeromZ}}$  and   $\mathrm{\ref{MeromZeta}}$, then 
 \begin{align}\label{RaabeF}
 \int_{t\in[0,1]^N}\zeta_{N,n}\big(s,w+W(t),\M \big) dt =\mathcal{Z}_{N,n}\big(s,w,\M\big)\qquad(\mathrm{``Raabe \  formula"}). 
\end{align}
If, in addition,    no column of $\M$ vanishes  and  if $|\arg(x_i)|<A_\M  \   (1\le i \le N)$, then 
\begin{align}
 \frac{\partial^N \mathcal{Z}_{N,n}\big(s,W(x),\M\big)}{\partial x_1\partial x_2\cdots \partial x_N}  & = 
  \big( \Delta_{e_1}\circ\Delta_{e_2}\circ\cdots\circ\Delta_{e_N}) 
 \big(\zeta_{N,n}(s,W(x),\M)\big) \nonumber  \\ &  
=   (-1)^N 
 \prod_{j=1}^n \Big(\sum_{i=1}^N x_i a_{ij}\Big)^{-s}=  (-1)^N\zeta_{0,n}\big(s,W_\M(x)\big)   ,\label{EQQS}
\end{align}
where $\Delta_{e_i}$ is the difference operator  in (iii) of Theorem  $\mathrm{\ref{Liepolys}}$.
\end{proposition}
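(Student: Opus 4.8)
The plan is to prove \eqref{RaabeF} first by a direct change-of-variables/Fubini argument in the region of absolute convergence, and then extend it to all admissible $s$ by analytic continuation; after that, \eqref{EQQS} will follow by differentiating \eqref{RaabeF} and telescoping. First I would assume $\re(s)>N$, so that everything in sight converges absolutely (see \S\ref{Converge}). Using the rewriting \eqref{AltDefZeta}, I would write
\[
\int_{t\in[0,1]^N}\!\zeta_{N,n}\big(s,W(x+t),\M\big)\,dt
=\int_{t\in[0,1]^N}\sum_{k\in\N_0^N}\zeta_{0,n}\big(s,W_\M(k+x+t)\big)\,dt .
\]
Interchanging sum and integral (justified by absolute convergence and positivity of the real parts, exactly as in \S\ref{Converge}), and noting that the cubes $k+[0,1]^N$ for $k\in\N_0^N$ tile $[0,\infty)^N$ up to measure zero, the right-hand side collapses to $\int_{u\in(0,\infty)^N}\zeta_{0,n}\big(s,W_\M(x+u)\big)\,du$, which is precisely $\mathcal{Z}_{N,n}\big(s,W(x),\M\big)$ by \eqref{AltDefZ}. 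This proves \eqref{RaabeF} for $\re(s)>N$. To pass to general $s$, both sides are meromorphic in $s$ on $\C$ away from the polar set of Proposition \ref{MeromZeta} (the left side because $\zeta_{N,n}$ is, together with the fact that integration over the compact set $[0,1]^N$ preserves meromorphy in $s$, uniformly in $t$; the right side by Proposition \ref{MeromZ}), so they agree wherever both are regular, i.e.\ for all $s$ outside the $\tilde s$.

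For \eqref{EQQS}, I would start from \eqref{RaabeF} and apply $\partial^N/\partial x_1\cdots\partial x_N$. By the fundamental theorem of calculus applied once in each variable $x_i$ — differentiating $\int_0^1(\cdots)\,dt_i$ in $x_i$ turns it into the difference of the integrand at $t_i=1$ and $t_i=0$ — one gets
\[
\frac{\partial^N}{\partial x_1\cdots\partial x_N}\int_{t\in[0,1]^N}\zeta_{N,n}\big(s,W(x+t),\M\big)\,dt
=\big(\Delta_{e_1}\circ\cdots\circ\Delta_{e_N}\big)\big(\zeta_{N,n}(s,W(x),\M)\big),
\]
where I use that $W$ is linear so $W(x+t+e_i)=W(x+t)+\M_i$ shifts the $x$-argument by the $i$th standard basis vector of $\R^N$. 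This gives the first equality in \eqref{EQQS}. For the second equality, I would iterate the difference equation \eqref{DifEq}: each $\Delta_{e_i}$ applied to $\zeta_{N,n}(s,W(x),\M)$ replaces it by $-\zeta_{N-1,n}$ for the matrix with row $i$ deleted (note $W(x)+\M_i=W(x+e_i)$ matches the shift $w\mapsto w+\M_i$ in \eqref{DifEq}), so after all $N$ difference operators we are left with $(-1)^N\zeta_{0,n}\big(s,W_\M(x)\big)$, which by definition \eqref{NEq0case} is $(-1)^N\prod_{j=1}^n\big(\sum_i x_i a_{ij}\big)^{-s}$. One must check the bookkeeping that successive deletions of rows are consistent regardless of order — this is immediate since deleting distinct rows commutes — and that $\zeta_{0,n}$ carries no further $x$-dependence that a difference operator could act on nontrivially.

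The main obstacle is not conceptual but one of technical care: justifying the interchange of the infinite sum and the integral over $[0,1]^N$ in the proof of \eqref{RaabeF}, and — more delicately — justifying that one may differentiate the integral $\int_{[0,1]^N}\zeta_{N,n}(s,W(x+t),\M)\,dt$ under the integral sign in $x$ (equivalently, applying Leibniz/FTC) at values of $s$ where $\zeta_{N,n}$ has only been obtained by meromorphic continuation rather than by an absolutely convergent series. I would handle this by first establishing both \eqref{RaabeF} and \eqref{EQQS} for $\re(s)>N$, where $\zeta_{N,n}$ is given by its convergent series \eqref{AltDefZeta} and termwise differentiation is legitimate (the series of $x$-derivatives again converging locally uniformly, by the bounds of \S\ref{Converge}), and then invoking analytic continuation in $s$ for each fixed $x$ with $\re(x_i)>0$: both sides of \eqref{EQQS} are meromorphic in $s$ with the stated polar set, so equality on a half-plane forces equality everywhere outside the $\tilde s$. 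A secondary, purely notational point is matching the two indexings of basis vectors — the $e_i\in\R^N$ shifting the $x$-variables versus the effect $w\mapsto w+\M_i$ on the $w$-variables in \eqref{DifEq} — but linearity of $W$ makes this automatic.
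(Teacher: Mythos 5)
Your proposal is correct and follows essentially the same route as the paper: \eqref{RaabeF} via the tiling/Fubini argument applied to $f(x)=\zeta_{0,n}(s,W_\M(x))$ for $\re(s)\gg0$ followed by analytic continuation, the first equality in \eqref{EQQS} by differentiating the Raabe integral and applying the fundamental theorem of calculus in each variable (the paper phrases this as $\partial_{x_i}g(t+x)=\partial_{t_i}g(t+x)$ and carrying out the iterated integrals), and the second by iterating the difference equation \eqref{DifEq} together with $W_\M(x+e_i)=W_\M(x)+\M_i$. Your extra caution about justifying the differentiation at continued values of $s$ — first proving everything on $\re(s)>N$ and then continuing — is a valid alternative to the paper's slightly more direct observation that the Raabe-operator differentiation identity applies to any pair of smooth functions, in particular to the already-continued $\zeta_{N,n}$ and $\mathcal{Z}_{N,n}$, which are holomorphic in $(s,w)$ away from the polar set.
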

\begin{proof}   
 Quite generally by   Fubini's theorem, if $\int_{t\in(0,\infty)^N}|f (t)|\,dt<\infty$ then 
$$
\int_{t\in [0,1]^N}\Big(\sum_{k\in\N_0^N}  f( k+t)\Big)\,dt =
\sum_{k\in\N_0^N} \int_{t\in k+[0,1]^N} f( t) \,dt=\int_{t\in  (0,\infty)^N } f( t)\, dt.
$$
Applying this to  $f(t):=\zeta_{0,N}\big(s,w+W_\M(t)\big)$, \eqref{AltDefZeta} and \eqref{AltDefZ}
prove   \eqref{RaabeF} for $\re(s)\gg0$, and so  by analytic continuation for any   $s\not=\tilde{s}$.

We prove    \eqref{EQQS} next. Take $w:=W(x)$,  so that   $w+W(t)=W(x+t)$. Letting $g(x):=\zeta_{N,n}\big(s, W(x),\M \big)$, we have from   \eqref{RaabeF}
\begin{align*} 
\int_{t\in[0,1]^N}g(x+t)\,dt= \int_{t\in[0,1]^N}\zeta_{N,n}\big(s, W(x+t),\M \big) dt =\mathcal{Z}_{N,n}\big(s,W(x),\M\big)=:h(x). 
\end{align*}
 But if   two smooth functions $g$ 
 and $h$  are related by the Raabe operator, so  $h(x)=\int_{t\in[0,1]^N}g(x+t)\, dt$,
 then  we claim (see below) that 
\begin{align}\label{CLaim}\frac{\partial^N h}{\partial x_1\cdots\partial x_N}= 
 \Delta_{e_1}\circ\Delta_{e_2}\circ\cdots\circ\Delta_{e_N} (g),
\end{align}
proving  the first line in \eqref{EQQS}. 
 The second line in \eqref{EQQS} follows from  
$$
\big(\Delta_{e_1}\circ\Delta_{e_2}\circ\cdots\circ\Delta_{e_N}) 
 \big(\zeta_{N,n}(s,W(x),\M)\big)  = (-1)^N\zeta_{0,n}\big(s,W_\M(x)\big). 
$$ 
This   in turn  is proved by repeatedly using $W_\M(x+e_i)=W_\M(x)+ \M_i$ and    \eqref{DifEq}.

 The   claim \eqref{CLaim}  is proved by moving the differential operator  into the integrand in the Raabe operator,  observing that 
 $\frac{\partial }{\partial x_i} g(t+x)= \frac{\partial  }{\partial t_i}g(t+x)$,  and  carrying out the successive  iterated integrals.
\end{proof}

\section{Proof of claims concerning  $P_{\ell,\g}$ and $Q_{\ell,\g} $}  \label{ProofTheorem1}
 Theorem $1'$ below includes  Theorem \ref{Liepolys} in 
 the Introduction regarding $P_{\ell,\g}$, adds the 
 corresponding claims for $Q_{\ell,\g}$, and adds (vi) below 
 connecting   $P_{\ell,\g}$ to $Q_{\ell,\g} $.
 \begin{theorem*}\label{Liepolysprime} Let $\g$ be a 
 semisimple complex Lie algebra of rank $r$,  
let $n $  be the number of positive roots 
 in a root system  for $\g$,  let $\zeta_\g(s,x) $ be as in \eqref{GenHurwDef}, $\mathcal{Z}_\g(s,x)$ as in 
 \eqref{GenIntzetaDef},
 and   let $\ell=0,1,2,\ldots$.
 Then the series in \eqref{GenHurwDef} and the 
 integral in 
 \eqref{GenIntzetaDef} converge for $\re(s)>r$ and  $x=(x_1,\ldots,x_r)$ with $x_k>0 \, \ (1\le k\le r)$, and are analytic functions of $(s,x)$ there. They have meromorphic 
 continuations in $s$ to all  of $ \C$ which are  regular at $s=-\ell$. The special values $P_{\ell,\g}(x):=\zeta_\g(-\ell,x)$ and  
 $Q_{\ell,\g}(x):=\mathcal{Z}_\g(-\ell,x)$ are polynomials in  
$ x_1,\ldots,x_r $ with rational coefficients, have total degree $n\ell+r$ and  satisfy the following.
\vskip.2cm

\noindent$\mathrm{(0)}$    $P_{\ell,\mathfrak{sl}_2 }(x)=-B_{\ell+1}(x)/(\ell+1)$ and $Q_{\ell,\mathfrak{sl}_2 }(x)=-x^{\ell+1} /(\ell+1)$.
    \vskip.15cm 

\noindent$\mathrm{(i)}$   $P_{\ell,\g}(x)$ and $Q_{\ell,\g}(x)$ depend  only on the isomorphism class of $\g$, up to re-numbering  
 the  $x_i$. More precisely, if $\g'$ is isomorphic to $\g$,   there is a  permutation  $\rho$ of $\{1,\ldots,r\} $ making  $P_{\ell,\g'}(x)= P_{\ell,\g}(x^\rho)$, where  
  $ (x^\rho )_i:= x_{\rho(i)}\ \,(1\le i\le r)$. Similarly, $Q_{\ell,\g'}(x)= Q_{\ell,\g}(x^{\rho'})$ for some permutation $\rho'$.

    \vskip.15cm 

\noindent$\mathrm{(ii)}$  If $\g_1$ and $\g_2$ are semisimple algebras, then $P_{\ell,\g_1\times\g_2}(x,y)=P_{\ell,\g_1}(x)  P_{\ell,\g_2}(y)$ and $Q_{\ell,\g_1\times\g_2}(x,y)=Q_{\ell,\g_1}(x)  Q_{\ell,\g_2}(y)$, on conveniently numbering the variables.

    \vskip.15cm

\noindent$\mathrm{(iii)}$  Define commuting  difference operators  
 $(\Delta_{e_k} P)(x):=P(x+e_k)-P(x)   $, where $e_1,\ldots,e_r$  is the standard basis of $\R^r$. Then, with $\lambda_k$ and 
$\alpha^{\!\lor}$ as in  \eqref{WittDef}, 
\begin{align*}
\big( \Delta_{e_1}\circ\Delta_{e_2}\circ\cdots\circ\Delta_{e_r})(P_{\ell,\g})(x) =\frac{\partial^N Q_{\ell,\g}(x)}{\partial x_1 \cdots \partial x_N} 
 = (-1)^r \Big(\! \prod_{  \alpha\in\Phi^+}\! \sum_{k=1}^r  x_k( \lambda_k,\alpha^{\!\lor})\!\Big)^{\!\ell}\in\Z[x].
\end{align*} 
    \vskip.15cm 
\noindent$\mathrm{(iv)}$  $P_{\ell,\g}(\mathbf{1} - 
x)=(-1)^{n\ell+r}P_{\ell,\g}(x),$ where   $\mathbf{1}:=(1,1,\ldots,1)\in \R^r$.  
    \vskip.15cm 

\noindent$\mathrm{(v)}$    
$\displaystyle Q_{\ell,\g}(x)=\sum_{\substack{L=(L_1,\ldots,L_r)\in\N_0^r\\ L_1+\cdots+ L_r=n\ell+r}}
 a_L\prod_{i=1}^r x_i^{L_i}\,  $ and   $\,\displaystyle
P_{\ell,\g}(x)=\sum_{\substack{L=(L_1,\ldots,L_r)\in\N_0^r\\ L_1+\cdots+ L_r=n\ell+r}} a_L\prod_{i=1}^r B_{L_i}(x_i)$, where both expressions share the same coefficients $a_L=a_{L,\ell,\g}\in\Q$.
    \vskip.15cm 

\noindent$\mathrm{(vi)}$   $\displaystyle  Q_{\ell,\g}(x)=\int_{t\in[0,1]^r}P_{\ell,\g}(x+t)\,dt  $.
\end{theorem*}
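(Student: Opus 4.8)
The plan is to deduce Theorem $1'$ from the results of \S\ref{ShinBarnes}--\ref{Relations} via the identifications $\zeta_\g(s,x)=\zeta_{r,n}(s,W(x),\M_\g)$ and $\mathcal{Z}_\g(s,x)=\mathcal{Z}_{r,n}(s,W(x),\M_\g)$ of \eqref{LieToShinBarn}--\eqref{LieToInt}, in which $N=r$, the matrix $\M_\g$ has entries $(\lambda_i,\alpha^{\!\lor})\in\N_0$ with $(\M_\g)_{i\alpha_i}=1$, and $x\mapsto W(x)$ is $\Z$-linear. Convergence of \eqref{GenHurwDef} and \eqref{GenIntzetaDef} for $\re(s)>r$ and analyticity in $(s,x)$ follow from \S\ref{Converge} (note $N/(n-Z_{\M_\g})\le N=r$), and the meromorphic continuation, regular at $s=-\ell$, from Propositions \ref{MeromZ} and \ref{MeromZeta}. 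Theorem \ref{AppliedFiniteTaylor} and the remark following it give $\zeta_{r,n}(-\ell,w,\M_\g),\,\mathcal{Z}_{r,n}(-\ell,w,\M_\g)\in\Q(\{(\lambda_i,\alpha^{\!\lor})\})[w]=\Q[w]$; composing with the $\Z$-linear substitution $w=W(x)$ shows $P_{\ell,\g}$ and $Q_{\ell,\g}$ are polynomials in $x$ with rational coefficients. Property (0) is then a one-line check: for $\g=\mathfrak{sl}_2$ one has $r=n=1$, $\M_\g=(1)$ and $W(x)=x$, so $\zeta_{\mathfrak{sl}_2}(s,x)=\sum_{k\ge0}(x+k)^{-s}$ is the Hurwitz zeta function with classical value $-B_{\ell+1}(x)/(\ell+1)$ at $s=-\ell$, while $\mathcal{Z}_{\mathfrak{sl}_2}(s,x)=\int_0^\infty(x+t)^{-s}\,dt=x^{1-s}/(s-1)$ gives $Q_{\ell,\mathfrak{sl}_2}(x)=-x^{\ell+1}/(\ell+1)$.

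Next I would pin down the total degree together with (v) and (vi). Applying the homogeneity relation \eqref{Homog} with $N=r$ and $W(\lambda x)=\lambda W(x)$ yields $Q_{\ell,\g}(\lambda x)=\lambda^{n\ell+r}Q_{\ell,\g}(x)$, so $Q_{\ell,\g}$ is homogeneous of total degree $n\ell+r$ (it is not the zero polynomial, e.g.\ by (0), or by (iii) below); writing it in the monomial basis gives the first identity of (v). Property (vi) follows from the Raabe formula \eqref{RaabeF} of Proposition \ref{Raabe} with $\M=\M_\g$ and $w=W(x)$: for $\re(x_i)>0$ and $s$ near $-\ell$ one has $\int_{[0,1]^r}\zeta_\g(s,x+t)\,dt=\mathcal{Z}_\g(s,x)$, and letting $s\to-\ell$ (the integral over the compact cube $[0,1]^r$ commuting with the limit) gives $\int_{[0,1]^r}P_{\ell,\g}(x+t)\,dt=Q_{\ell,\g}(x)$, hence for all $x$ since both sides are polynomials. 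Because the Raabe operator $P\mapsto\int_{[0,1]^r}P(x+t)\,dt$ is upper triangular with unit diagonal in the monomial basis --- in particular a graded bijection of $\R[x]$, as noted after \eqref{RaabeFORM2} --- we obtain $\deg P_{\ell,\g}=\deg Q_{\ell,\g}=n\ell+r$; and since the tensored one-variable identity $\int_0^1 B_m(u+t)\,dt=u^m$ (equivalently $B_{m+1}(u+1)-B_{m+1}(u)=(m+1)u^m$) shows this operator sends $\prod_i B_{L_i}(x_i)$ to $\prod_i x_i^{L_i}$, injectivity forces the Bernoulli expansion in (v) to carry exactly the coefficients $a_L$ appearing in the monomial expansion of $Q_{\ell,\g}$. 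Finally (iv) is immediate from (v), applying $B_m(1-u)=(-1)^m B_m(u)$ in each variable and noting that every $L$ occurring has $|L|=n\ell+r$.

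For (iii) I would invoke \eqref{EQQS} of Proposition \ref{Raabe} with $N=r$, $\M=\M_\g$ and $w=W(x)$: away from the poles one has $\partial^r\mathcal{Z}_\g(s,x)/(\partial x_1\cdots\partial x_r)=(\Delta_{e_1}\circ\cdots\circ\Delta_{e_r})(\zeta_\g(s,x))=(-1)^r\prod_{\alpha\in\Phi^+}(\sum_{k=1}^r x_k(\lambda_k,\alpha^{\!\lor}))^{-s}$. Every term here is analytic at $s=-\ell$ (the last expression is even entire in $s$) and differentiation commutes with analytic continuation, so specializing to $s=-\ell$ yields the formula of (iii), which lies in $\Z[x]$ because the $(\lambda_k,\alpha^{\!\lor})$ are integers; in particular this exhibits $Q_{\ell,\g}$ as nonzero.

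It remains to treat (i) and (ii), the only genuinely Lie-theoretic points. The key observation is that $\zeta_\g(s,x)$ and $\mathcal{Z}_\g(s,x)$ depend only on the root system of $\g$ equipped with its Killing form, together with a labeled choice $\lambda_1,\dots,\lambda_r$ of fundamental weights, since $\Phi^+$ and the $\lambda_i$ are determined by the labeled simple roots and \eqref{GenHurwDef} only sees the multiset $\{(\,\cdot\,,\alpha^{\!\lor})\}_{\alpha\in\Phi^+}$. An isomorphism $\g'\cong\g$ preserves the Killing form and sends one positive system to another; since any two positive systems of a fixed root system are conjugate under the Weyl group, which acts by isometries and carries simple roots to simple roots, transporting the labeling back produces a permutation $\rho$ of $\{1,\dots,r\}$ with $\zeta_{\g'}(s,x)=\zeta_\g(s,x^\rho)$, whence $P_{\ell,\g'}(x)=P_{\ell,\g}(x^\rho)$, and similarly for $Q_{\ell,\g}$. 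For (ii), the root system of $\g_1\times\g_2$ is the orthogonal disjoint union $\Phi_1\sqcup\Phi_2$, its positive roots are $\Phi_1^+\sqcup\Phi_2^+$, its fundamental weights are those of the two factors lying in mutually orthogonal subspaces, and $(\lambda_i,\alpha^{\!\lor})=0$ whenever $\lambda_i$ and $\alpha$ belong to different factors; hence for $\re(s)$ large the product over $\Phi^+$ and the sum over $\N_0^{r_1+r_2}$ (resp.\ the integral over $(0,\infty)^{r_1+r_2}$) factor, giving $\zeta_{\g_1\times\g_2}(s,(x,y))=\zeta_{\g_1}(s,x)\,\zeta_{\g_2}(s,y)$ (resp.\ the analogue for $\mathcal{Z}$), and analytic continuation then lets us specialize to $s=-\ell$. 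I expect the only real obstacle to be making (i) fully precise: every other part is assembly of results already in hand, whereas (i) demands care in tracking how an abstract Lie algebra isomorphism acts on $\Phi^+$ and on the ordered fundamental weights, in particular in verifying that the non-canonical choice of positive system costs nothing more than a relabeling of the variables.
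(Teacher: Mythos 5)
Your proposal is correct and follows essentially the same route as the paper's proof: reduce to the Shintani--Barnes framework via \eqref{LieToShinBarn}--\eqref{LieToInt}, invoke Propositions \ref{MeromZ}, \ref{MeromZeta} and Theorem \ref{AppliedFiniteTaylor} for analyticity, rationality, and degree via \eqref{Homog}; get (vi), (v), (iv) from Proposition \ref{Raabe} and the Raabe operator's action on the Bernoulli basis; get (iii) from \eqref{EQQS}; and get (i), (ii) from Weyl-group conjugacy of positive systems and the factorization of the root system of a product.

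The one place where you take a mild shortcut relative to the paper is (i): you argue that a Lie-algebra isomorphism \emph{preserves the Killing form} and hence induces an isometry of root systems, so you can work entirely with isometries. The paper instead works with the weaker notion of root-system isomorphism --- a linear map $f$ satisfying only the ratio condition \eqref{RootSysIsoDef}, which need not be an isometry --- and verifies by direct computation (using the $\lambda_i=\sum_k c_{ik}\alpha_k$ expansion) that the pairings $(\sum(m_i+x_i)\lambda_i,\alpha^{\!\lor})$ are preserved under such $f$. Your route is sufficient for the stated theorem because a Lie-algebra isomorphism really does yield an isometry (the Killing form is intrinsic, and Cartan subalgebras are conjugate by inner automorphisms, which preserve it); the paper's version proves the slightly stronger fact that $\zeta_\g$ depends only on the root-system isomorphism class, not merely its isometry class. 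If you follow your shortcut you should say explicitly that the induced map on $\mathfrak{h}^*$ is an isometry with respect to the Killing-form inner product, since that is the fact doing the work.
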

\noindent Since the Bernoulli polynomials $B_m(t)$ satisfy $\int_0^1 B_m(t)\,dt=0$ for $m>0$, the Bernoulli polynomial expansion  in  (v)   implies  
  $\int_{t\in[0,1]^r}   P_{\ell,\g}(t) \,dt=0$. In fact, as $\deg(P_{\ell,\g})=n\ell+r$, the   Bernoulli
 expansion in  (v) is   equivalent to \cite[Lemma 5.1]{FR}  
$$
 \int_{t\in[0,1]^r}  \frac{\partial^{|J|}P_{\ell,\g}(t)}{\partial t_1^{J_1}\cdots\partial t_r^{J_r}}\,dt=0  \qquad\big(J=(J_1,\ldots,J_r)\in \N_0^r,\ 0\le |J|:=\sum_{i=1}^r J_i <n\ell+r\big) . 
$$ 
 \begin{proof}
In \eqref{LieToShinBarn} and \eqref{LieToInt} we saw that on letting 
$\big(\M_\g\big)_{i\alpha}:=
(\lambda_i,\alpha^{\!\lor})\in\N\cup\{0\}$, then    $\M_\g$ satisfies hypothesis 
 $\Hh$ and
$$
\zeta_\g(s,x) = \zeta_{r,n}(s,W(x),\M_\g),\qquad 
\mathcal{Z}_\g(s,x)=\mathcal{Z}_{r,n}(s,W(x),\M_\g). 
$$
Moreover, as remarked three lines after
\eqref{AMDef}, no column of $\M_\g$ vanishes.

The convergence and analyticity for $\re(s)>r$ and  
$x=(x_1,\ldots,x_r) \in(0,\infty)^r$  
of the series \eqref{GenHurwDef} defining $\zeta_\g(s,x) $, and of the integral  \eqref{GenIntzetaDef}  defining $\mathcal{Z}_\g(s,x)$,  
follow  from the final sentence of \S\ref{Converge}. Their 
meromorphic continuation and  regularity at $s=-\ell$ follow from Propositions 
 \ref{MeromZ} and  \ref{MeromZeta}. That $ Q_{\ell,\g}(x)$ and $  P_{\ell,\g}(x) $  are polynomials with coefficients in $\Q$
 follows from the remark immediately after the statement of 
Theorem  \ref{AppliedFiniteTaylor} combined with the fact that   
$x\to W(x)$  is a linear function  with  coefficients in $\Q$. 

By the homogeneity property 
\eqref{Homog}  applied at $s=-\ell$,  $Q_{\ell,\g}(\lambda x)=\lambda^{n\ell+r}  Q_{\ell,\g}(x)$ for $\lambda>0$. Since $Q_{\ell,\g}(x)$ is 
 not identically zero by \eqref{EQQS}, it follows that 
$ Q_{\ell,\g}(x)$ is a homogeneous polynomial of   degree $n\ell+r$. 

 The Raabe formula 
 \eqref{RaabeF}  at $s=-\ell$ with $w:=W(x)$ proves   (vi) for
 $x\in(0,\infty)^r$.  As both sides of (vi) are polynomials in $x$ with coefficients in $\Q$, (vi) therefore holds for all $x\in\R^r$. 

It is shown in \cite[Lemma 2.4]{FP} that the Raabe operator mapping a polynomial 
 $f(x)\to \int_{t\in[0,1]^r} f(x+t)\,dt$ is a  degree-preserving 
 $\R$-vector space  automorphism of $\R[x]$     taking  
 the basis $\big\{\prod_{i=1}^r B_{L_i}(x_i) \big\}_{L\in\N_0^r} $ of $\R[x]$ 
to the  basis $\big\{\prod_{i=1}^r x_i^{L_i}\big\}_{L\in\N_0^r} $. Thus (vi) implies  that $P_{\ell,\g}(x)$   has degree  $n\ell+r$ and  that  (v) holds. Claim (iv)  follows from  $B_m(1-x)=(-1)^mB_m(x)$ and (v). 

Since the entries $\big(\M_\g\big)_{i\alpha}:=
(\lambda_i,\alpha^{\!\lor})$  are non-negative integers, (iii) follows from  \eqref{EQQS}. 
From    $\mathcal{Z}_{\mathfrak{sl}_2} (s,x):= 
\int_0^\infty(x+t)^{-s}\,dt=  -\frac{x^{-s+1}}{1-s}$, initially valid for $\re(s)>1$ and $ x>0$, claim (0) for $Q_{\ell,\mathfrak{sl}_2}$  follows  by analytic continuation to $s=-\ell$. 
Claim (v) then gives claim (0)  for $P_{\ell,\mathfrak{sl}_2}$. Of course, (0) was  long been known.

We now turn to (i) and (ii), having proved all the other  statements in
 Theorem $1'$. Given a root system $\Phi\subset E$,  where $E $ is 
an $r$-dimensional Euclidean vector space spanned by $\Phi$, the definition \eqref{GenHurwDef} of $\zeta_\g(s,x)$   requires 
arbitrarily choosing a system of positive roots $\Phi^+\subset\Phi$. Associated to $\Phi^+$ there is a  unique base, \ie a subset of  $\Phi $ consisting of $r$ simple roots  \cite[\S10.2]{Hum}
which we label  (again, arbitrarily) 
 $\alpha_1,\ldots,\alpha_r$. This fixes  the 
 fundamental dominant weights $\lambda_1,\ldots,\lambda_r
\in E $ as the basis dual  to the basis of co-roots 
 $\alpha_1^{ \, \lor},\ldots,\alpha_r^{\, \lor}$ under the inner 
 product $ (\ \, ,\  )$ on $E$ \cite[p.\ 67]{Hum}. Notice that 
 the role of the coordinate $x_i$ of $x $ in \eqref{GenHurwDef} thus 
 depends  on an arbitrary ordering of the fundamental dominant weights, or equivalently  of the simple roots.

 We now show that a different  choice of $ {\Phi}^{\widetilde{{\phantom{.}}} }\subset\Phi$ of positive roots can  only permute   the variables $x_1,\ldots,x_r$. Suppose   that 
  we have numbered  $  \widetilde{\alpha}_1,\ldots,\widetilde{\alpha}_r$ the simple roots of  $ {\Phi}^{\widetilde{{\phantom{.}}} }$. 
As there is an element  $\tau$  of the Weyl group of $\Phi$     for which  $ {\Phi}^{\widetilde{{\phantom{.}}} } 
 =\tau\big(\Phi^{+}\big)$  \cite[p.\ 51]{Hum}, we have the equality of sets $\{\tau(\alpha_1),\ldots,\tau(\alpha_r)\}=\{\widetilde{\alpha}_1,\ldots,\widetilde{\alpha}_r\} $. Thus there is a permutation 
$\sigma\in S_r $ for which $ \widetilde{\alpha}_i= \tau(\alpha_{\sigma(i)})$. 
As elements of the Weyl group are   compositions of   reflections, $\tau $ is an isometry  and so   
$\widetilde{\alpha}_i^{\, \lor} =  \tau({\alpha_{\sigma(i)}}^{\!\lor}) $. 
As $ \big(\tau(\lambda_{\sigma(j)}),\widetilde{\alpha}_i^{\,\lor}\big)=
\big(\tau(\lambda_{\sigma(j)}),\tau({\alpha_{\sigma(i)}}^{\!\lor})\big)=  \big( \lambda_{\sigma(j)},{\alpha_{\sigma(i)}}^{\!\lor} \big)=\delta_{ij}$, the    fundamental dominant weights 
  for ${\Phi}^{\widetilde{{\phantom{.}}} }$ are  given by  
$ \widetilde{\lambda}_i  =\tau(\lambda_{\sigma(i)})$ 
 Letting $\rho:=\sigma^{-1}\in S_r$ and using 
  $ \sum_{m\in\N_0^r}f(m)=\sum_{m\in\N_0^r}f(m^\sigma)$, we have for $\re(s)>r$,
\begin{align*}
&\sum_{m\in\N_0^r}\prod_{\widetilde{\alpha}\in{\Phi}^{\widetilde{{\phantom{.}}} }} 
\Big({ \sum_{i=1}^r} (m_i+x_i) \widetilde{\lambda}_i , \widetilde{\alpha}^\lor\Big)^{-s}
 = \sum_{m\in\N_0^r}\prod_{ \alpha \in \Phi^+} 
\Big( \sum_{i=1}^r  (m^\sigma_i+x_i) \tau(\lambda_{\sigma(i)}) ,
 \tau(\alpha^\lor)\Big)^{-s}\\ &=\sum_{m\in\N_0^r}\prod_{ \alpha \in \Phi^+} 
\Big( \sum_{i=1}^r  (m_{\sigma(i)}+x_i)  \lambda_{\sigma(i)}  ,
  \alpha^\lor \Big)^{-s} =\sum_{m\in\N_0^r}\prod_{ \alpha \in \Phi^+} 
\Big( \sum_{i=1}^r  (m_i+x_{\rho(i)}) \lambda_i ,
 \alpha^\lor\Big)^{-s} .
\end{align*}
This shows  for $\re(s)>r$ that 
replacing $\Phi^+$ by  ${\Phi}^{\widetilde{{\phantom{.}}}} $ in   \eqref{GenHurwDef} amounts to replacing $x$ by $x^\rho$. By analytic continuation,     
 $\zeta_\g(s,x)$
does  not depend  (up to re-numbering the $x_i$) on the choice
of a system of positive roots $\Phi^+\subset\Phi$ nor on the ordering of the simple simple roots in $\Phi^+$. An analogous argument   for integrals works for $\mathcal{Z}_\g(s,x)$.

We can now prove (i), \ie that up to re-numbering   the 
$x_i$,   $ \zeta_\g(s,x)$  and $\mathcal{Z}_\g(s,x)$  depend 
 only  on the isomorphism class of the root system 
$\Phi\subset E$ attached to $\g$, and so depend  only on the isomorphism class 
of $\g$ \cite[pp. 75 and 84]{Hum}.
Suppose $\Gamma\subset F$ is a  root system isomorphic to $\Phi\subset E$. By definition \cite[p. 43]{Hum},    there is then  a  linear isomorphism  $f:E\to F$   (not in general an isometry) 
mapping $ \Phi$ onto $\Gamma$ and satisfying for all $\alpha,\beta\in\Phi$  the relation
\begin{equation}\label{RootSysIsoDef}
\frac{\big( \alpha , \beta \big)}{\big( \alpha ,  \alpha \big)}= 
 \frac{\big(f(\alpha),f( \beta)\big)}{\big(f(\alpha),f( \alpha)\big)}, 
\end{equation}
where we have  again used $ (\ \, ,\  )$  for the inner product on $F$. It is  routine to show, without even needing  \eqref{RootSysIsoDef}, that if 
 $\Phi^+\subset\Phi$ is a system of positive roots for $\Phi$, then $\Gamma^+:=f(\Phi^+)\subset \Gamma=f(\Phi)$ is a 
 system of positive roots for $\Gamma$. Since we have 
 already  shown that    the choice of a set of positive roots 
 within a given root system and a choice of the ordering of the simple roots  only affect  the numbering of 
 the variables $x_i$, to prove the isomorphism invariance 
 claimed in (i)  it suffices to show that there is no change when we  replace $\Phi^+$ by  $\Gamma^+$ in the definition of  $\zeta_\g(s,x)$ in  \eqref{GenHurwDef}    (and similarly for $\mathcal{Z}_\g(s,x)$ in  \eqref{GenIntzetaDef}).

One checks that if $\alpha_1,\ldots,\alpha_r$ are the 
 simple roots in $\Phi^+$, then  $f(\alpha_1),\ldots,f(\alpha_r)$ are the simple roots in $\Gamma^+$. 
 We   
 check next that if $\lambda_1,\ldots,\lambda_r$ are the fundamental dominant weights corresponding to 
 $\alpha_1,\ldots,\alpha_r$,  then  
 $f(\lambda_1),\ldots,f(\lambda_r)$ are the fundamental 
 dominant weights corresponding to  
$f(\alpha_1),\ldots,f(\alpha_r)$.   The $\lambda_i\in E,$ satisfy for $1\le i,j\le r$ the defining relation 
$ (\alpha_j^{\,\lor},\lambda_i)=\delta_{ij}$ (\,= Kronecker $\delta$).  Using the $\R$-basis 
$\alpha_1,\ldots,\alpha_r$ of $E$, we can write 
 $\lambda_i=\sum_k c_{ik}\alpha_k$, where  $c_{ik}\in\R$. Then,  
\begin{align*}
\delta_{ij}&= \big(\alpha_j^{\,\lor},\lambda_i \big)=  \big(\tfrac{2}{(\alpha_j, 
\alpha_j)}\alpha_j,{\textstyle{\sum_k}}  c_{ik}\alpha_k\big) =2\sum_k 
c_{ik}\frac{  (\alpha_j,  \alpha_k)}{(\alpha_j,\alpha_j)}
=2\sum_k 
c_{ik}\frac{  \big(f(\alpha_j),  f(\alpha_k)\big)}{\big(f(\alpha_j), f(\alpha_j)\big)}
\\ & 
= \big(\tfrac{2}{(f(\alpha_j),f(\alpha_j))}f(\alpha_j),\textstyle{\sum_k}  c_{ik}f(\alpha_k)\big) = 
\big( f(\alpha_j)^\lor, \textstyle{\sum_k} c_{ik}f(\alpha_k)\big) = \big( f(\alpha_j)^\lor,f(\lambda_i )\big),
\end{align*}
where we   used  \eqref{RootSysIsoDef} in the right-most equality of the first displayed line. Similarly,    
\begin{align*}
  \big({\textstyle{\sum_i}}&(m_i+x_i)f(\lambda_i), f(\alpha)^\lor\big) = \sum_{i,k} (m_i+x_i)c_{ik}\big(f(\alpha_k),\tfrac{2}{(f(\alpha), 
f(\alpha))}f(\alpha)\big)
 \\ 
&= \sum_{i,k} (m_i+x_i)c_{ik}\big( \alpha_k ,\tfrac{2}{(\alpha, 
\alpha )} \alpha \big)=\big(\textstyle{\sum_i}(m_i+x_i)\lambda_i, \alpha^\lor \big)\qquad\qquad(\forall\alpha\in\Phi^+),   
\end{align*}
showing that nothing changes when we replace  $\Phi^+$    by  $\Gamma^+$ in \eqref{GenHurwDef} or \eqref{GenIntzetaDef}, proving (i).

To prove (ii), suppose $\Phi_i\subset E_i$ is a root system 
for $\g_i \ \, (i=1,2)$. Then $\Phi=(\Phi_1,0)\cup (0,\Phi_2)\subset E:=E_1\times E_2$  is a root system for $\g_1\times\g_2$, where the  inner product on $E$ is the  sum of the 
 component-wise inner products.  As $\Phi^+=(\Phi_1^+,0)
\cup (0,\Phi_2^+)\subset\Phi$ is a system of positive roots, a glance at \eqref{GenHurwDef} and \eqref{GenIntzetaDef} now 
 shows that   (ii) holds. 
\end{proof}

\section{Examples}\label{EXAMPLES}
We conclude with   examples of $P_{\ell,\g}$ for small $\ell$ and $\g=\mathfrak{sl}_3, \mathfrak{sl}_4,\mathfrak{so}_5, G_2, \mathfrak{so}_7$ and 
$\mathfrak{sp}_6$.   
The polynomials below seem to have   no symmetries,  except   under     Dynkin diagram  automorphisms.   Simple $\g \not=\mathfrak{so}_{8}$  have    at most 2 such symmetries  \cite[p.\ 66]{Hum}. For   $\g= \mathfrak{sl}_{r+1}$ this gives   invariance under  $x_i\to x_{r+1-i}\ \,(1\le i\le r)$  in the examples below. 

 Note that by \eqref{RaabeFORM2}     any $P_{\ell,\g}$    
below becomes a
  $Q_{\ell,\g}$ on     replacing every $B_{L_i}(x_i)$ by $x_i^{L_i}$. We also note that our last two examples below correspond to dual root systems. Our calculations used PARI/GP to implement Theorem \ref{AppliedFiniteTaylor}.
\begin{align*}
 P_{0,\mathfrak{sl}_3}(x_1,x_2)=\frac{B_2(x_1)}{4} 
 +  B_1(x_1) B_1(x_2)
 + \frac{B_2(x_2)}{4}  \qquad  \qquad  \qquad  \qquad  \qquad  \qquad  \qquad  \qquad 
\end{align*}
\begin{align*}
& P_{1,\mathfrak{sl}_3}(x_1,x_2)=-\frac{B_5(x_1)}{60} 
 + \frac{B_3(x_1) B_2(x_2)}{6} 
 +  \frac{B_2(x_1) B_3(x_2)}{6} 
  -\frac{B_5(x_2)}{60}   \qquad  \qquad  \qquad  \qquad 
\end{align*}
\begin{align*} P_{2,\mathfrak{sl}_3}(x_1,x_2) =\frac{B_8(x_1) }{480} 
 + \frac{B_5(x_1)B_3(x_2)}{15}  
 + \frac{ B_4(x_1)B_4(x_2)}{8}
 + \frac{B_3(x_1)B_5(x_2)}{15}  
 + \frac{B_8(x_2)}{480}  \qquad 
\end{align*}
\begin{align*}
&
  P_{0,\mathfrak{sl}_4}(x_1,x_2,x_3)=  -\frac{ B_3(x_1)+B_3(x_3)}{30}   -
 \frac{B_2(x_1)B_1(x_2)+B_2(x_3)B_1(x_2)}{6}  - 
\frac{B_3(x_2)}{10} \qquad 
\\ &\qquad \qquad\qquad\quad \  \  \ - \frac{B_2(x_2) B_1(x_1) +B_2(x_2) B_1(x_3)}{3}
   - \frac{B_2(x_1)  B_1(x_3) +B_1(x_1) B_2(x_3)}{4} 
 \\ &\qquad \qquad \qquad\quad \  \ \ -B_1(x_1)B_1(x_2) B_1(x_3)   
\end{align*}
 \begin{align*}
&P_{0,\mathfrak{so}_5} (x_1,x_2)  =\frac{1}{2}B_2(x_1)
 + B_1(x_1) B_1(x_2)
 + \frac{1}{4}B_2(x_2)  \qquad  \qquad \qquad  \qquad \qquad  \qquad \qquad  \qquad
 \end{align*}
 \begin{align*}
&P_{1,\mathfrak{so}_5} (x_1,x_2) =  -\frac{1}{72}B_6(x_1)
 + \frac{1}{4}B_4(x_1) B_2(x_2)
 + \frac{1}{3}B_3(x_1) B_3(x_2)  \qquad  \qquad  \qquad  \qquad  \qquad  \qquad \qquad  \qquad
 \\
 & \qquad   \qquad   \qquad  \ \
 + \frac{1}{8}B_2(x_1) B_4(x_2)
 - \frac{1}{576}B_6(x_2)
 \end{align*}
\begin{align*}
 & 
P_{2,\mathfrak{so}_5} (x_1,x_2)  =  \frac{4}{525}B_{10}(x_1)
 + \frac{4}{21}B_7(x_1)B_3(x_2)
 + \frac{1}{2}B_6(x_1)B_4(x_2)+ \frac{13}{25}B_5(x_1)B_5(x_2) 
\\  &  \qquad   \qquad   \qquad 
 + \frac{1}{4}B_4(x_1)B_6(x_2) 
 + \frac{1}{21}B_3(x_1)B_7(x_2)  
 + \frac{1}{4200} B_{10}(x_2) 
&\\
\\ 
 &P_{3,\mathfrak{so}_5} (x_1,x_2)  =  -\frac{1}{1680}B_{14}(x_1)
 + \frac{1}{5}B_{10}(x_1) B_4(x_2) 
 + \frac{4}{5}B_9(x_1) B_5(x_2) \\
 &
 \qquad   \qquad   \qquad   \ \  + \frac{11}{8}B_8(x_1) B_6(x_2)  
 + \frac{9}{7}B_7(x_1) B_7(x_2) 
 + \frac{11}{16}B_6(x_1) B_8(x_2)  \\
 &
 \qquad   \qquad   \qquad   \ \   + \frac{1}{5}B_5(x_1) B_9(x_2) 
 + \frac{1}{40}B_4(x_1) B_{10}(x_2) 
 - \frac{1}{215040}B_{14}(x_2) 
\end{align*}
\begin{align*}
  P_{0,G_2} (x_1,x_2)   =  \frac{1}{4}B_2(x_1)  
 +B_1(x_1) B_1(x_2)  + \frac{3}{4}B_2(x_2)   \qquad  \qquad  \qquad  \qquad  \qquad  \qquad  \qquad 
\end{align*}
\begin{align*}
&P_{1,G_2} (x_1,x_2)  =  -\frac{151}{124416}B_8(x_1)  
 + \frac{1}{6}B_6(x_1) B_2(x_2)  
 +  B_5(x_1) B_3(x_2) 
 + \frac{5}{2}B_4(x_1) B_4(x_2) \\
 &
  \qquad   \qquad   \qquad   \ \,  + 3 B_3(x_1) B_5(x_2)  
 + \frac{3}{2}B_2(x_1) B_6(x_2)   
 - \frac{151}{1536}B_8(x_2) 
\end{align*}
\begin{align*}
&P_{2,G_2} (x_1,x_2)  = \frac{1}{12936}B_ {14}(x_1) 
 + \frac{4}{33}B_{11}(x_1) B_3(x_2) 
 + \frac{3}{2}B_{10}(x_1) B_4(x_2) \qquad  \qquad \qquad  \qquad \\
 &  \qquad   \qquad   \qquad   \   
 + \frac{77}{9}B_9(x_1) B_5(x_2) 
 + \frac{115}{4}B_8(x_1) B_6(x_2) 
 + \frac{3022}{49}B_7(x_1) B_7(x_2) \\
 &   \qquad   \qquad   \qquad   \ 
 + \frac{345}{4}B_6(x_1) B_8(x_2) 
 + 77B_5(x_1) B_9(x_2) 
 + \frac{81}{2}B_4(x_1) B_{10}(x_2) \\
 &  \qquad   \qquad   \qquad   \    
 + \frac{108}{11}B_3(x_1) B_{11}(x_2)
 + \frac{729}{4312}B_{14}(x_2) . 
\end{align*}
\begin{align*}
& P_{0,\mathfrak{so}_7}(x_1,x_2,x_3) =  -\frac{7}{96} B_3(x_1)- \frac{25}{96}  B_3(x_2) - \frac{ 1 }{24}  B_3(x_3) - \frac{ 1 }{3}   B_2(x_1) B_1(x_2) \qquad \qquad 
\\
& \qquad   \qquad \qquad  \quad \quad        \ \   - \frac{ 2 }{3} B_2(x_2) B_1(x_1) - \frac{ 1 }{4} B_2(x_1) B_1(x_3)  -\frac{ 1  }{2} B_2(x_2) B_1(x_3) 
 \\
& \qquad \qquad    \qquad  \quad \quad       \ \   -\frac{ 1 }{4} B_2(x_3) B_1(x_1) - \frac{ 1 }{4} B_2(x_3) B_1(x_2)  
 -  B_1(x_1)B_1(x_2)B_1(x_3)\\
\end{align*} 
\begin{align*}
&P_{0,\mathfrak{sp}_6}(x_1,x_2,x_3) =  - \frac{ 7 }{192} B_3(x_1)- \frac{ 25 }{192}  B_3(x_2) - \frac{ 1  }{6}  B_3(x_3) - \frac{ 1 }{6}   B_2(x_1) B_1(x_2) \qquad \qquad 
  \\
& \qquad \qquad    \qquad  \quad \quad       \ \ - \frac{ 1 }{3} B_2(x_2) B_1(x_1) - \frac{ 1 }{4} B_2(x_1) B_1(x_3)  -\frac{ 1 }{2} B_2(x_2) B_1(x_3)  \\
& \qquad \qquad    \qquad  \quad \quad       \ \  -\frac{ 1 }{2} B_2(x_3) B_1(x_1)  - \frac{ 1 }{2} B_2(x_3) B_1(x_2)   
 -  B_1(x_1)B_1(x_2)B_1(x_3) .
 \end{align*}

\end{document}